\theoremstyle{definition}
\newtheorem{lemma}{Lemma}
\newtheorem{proposition}{Proposition}
\newtheorem{theorem}{Theorem}
\theoremstyle{remark}
\newtheorem{remark}{Remark}
\newtheorem{example}{Example}
\renewcommand{\Re}{\mathbb{R}}
\newcommand{\prob}{\mathbb{P}}
\newcommand{\expe}{\mathbb{E}}
\newcommand{\supp}{\mathrm{supp}}
\newcommand{\MEP}{\mathit{MEP}}
\newcommand{\IEP}{\mathit{IEP}}
\newcommand{\GG}{\mathit{GG}}
\newcommand{\ST}{\mathit{ST}}
\providecommand{\keywords}[1]
{
  \small	
  \textbf{\textit{Keywords---}} #1
}
\title{Entropy-based test for generalized Gaussian distributions}
\author[1]{Mehmet~Siddik~Cadirci}
\author[1]{Dafydd~Evans}
\author[1]{Nikolai~Leonenko}
\author[2]{Vitalii~Makogin \thanks{Corresponding author (e-mail: vitalii.makogin@uni-ulm.de).  The research was partially supported by DFG Grant 390879134.}}
\affil[1]{School of Mathematics, Cardiff University, Senghennydd~Road,~Cardiff,~Wales,~UK,~CF24~4AG.}
\affil[2]{Institute of  Stochastics, Ulm University, Ulm, 08069 Germany.}
\date{ \today}
\begin{document}
\markboth{IEEE TRANSACTIONS ON INFORMATION THEORY,~Vol.~00, No.~0, August~2020}%
{Cadirci \MakeLowercase{\textit{et al.}}: Entropy-based test for generalized Gaussian distributions}
%



\maketitle

\begin{abstract}
In this paper, we provide the proof of $L^2$ consistency for the $k$th nearest neighbour distance estimator of the Shannon entropy for an arbitrary fixed $k\geq 1.$ We construct the non-parametric test of goodness-of-fit for a class of introduced generalized multivariate Gaussian distributions based on a maximum entropy principle. The theoretical results are followed by numerical studies on simulated samples.
\end{abstract}

\keywords{
Maximum entropy principle, generalized Gaussian distribution, Shannon entropy, nearest neighbour estimator of entropy, goodness-of-fit test.
}

%

\section{Introduction}
%
%
%
%
We propose a non-parametric test of goodness-of-fit for a class of generalized multivariate Gaussian distributions. Our approach is based on the estimation of the differential (Shannon) entropy
\begin{equation}\label{eq:shannon}
H(f)= - \int_{\Re^m}f(x)\log f(x)\text dx 
\end{equation}

We use entropy estimators based on nearest neighbour distances. These were first studied by \citep{kozachenko1987} and subsequently by \citep{tsybakov,evans2002,goria2005, leonenko2008,leonenko2010,evans2008,penrose2011,delattre2017,gao2018,bulinski2019KL,bulinski2019shannon} and \cite{berrett2019a}. Nearest neighbour estimators (NNE) are particularly attractive because they are computationally efficient and generalise easily to the multivariate case. For an overview of non-parametric techniques of entropy estimation see \cite{beirlant1997}.

\citet{berrett2019a} have shown that subject to certain regularity conditions, as $k\to\infty$ the $k$-NNE of Shannon entropy is efficient only for $m\leq3$, and present a bias-corrected estimator for dimensions $m\geq4$. In this paper, we focus on the conventional $k$-NNE with fixed $k\geq1$, for which the asymptotic variance decrease rapidly up to $k=3$ only, see  \cite[Table 1]{berrett2019a}. It is interesting to note that this asymptotic inflation is distribution-free, which leads to the conjecture that $k=3$ is the most interesting case for any $m\geq 1$.

Entropy-based tests of goodness-of-fit exploit the so-called maximum entropy  principle \cite{vasicek1976, kapur1989}. \cite{choi2008} introduced  an entropy-based normality based on the fact that normal densities posses the largest Shannon entropy among all densities with the same variance, see also \cite{dudewicz1981,goria2005,evans2008} and the references therein. 
This paper proposes a new entropy-based test of generalized normality based on the maximum Shannon entropy principle for the generalized multivariate Gaussian distribution. 

The paper is organised as follows: we introduce the multivariate generalized Gaussian distribution in section~\ref{sec:GG} followed by the $k$-NNE of Shannon entropy in section~\ref{sec:entropyest}. In section~\ref{sec:maxent} we establish a maximum entropy principle for the generalized Gaussian distribution, and in section~\ref{sec:teststatistic} we present the associated goodness-of-fit statistics. Numerical results are included in section~\ref{sec:numerical}, with some auxiliary material deferred to Appendix~\ref{appx:lowerbound}. 

\section{Entropy estimation}\label{sec:entropyest}
Let $k\geq 1$ and $N>k$, and let  $\mathcal{X}_N=\{X_1, \ldots, X_N\}$ be a set of independent and identically distributed random vectors in $\Re^m$ with common density function $f$. Let $F$ be a finite subset of $\mathcal{X}_N$ having cardinality at least $k$, and let $\rho_k(x,F)$ denote the Euclidean distance between a point $x$ and its $k$th nearest neighbour in the set $F\setminus \{x\}$. 
The \emph{$k$th nearest neighbour estimator} ($k$-NNE) of the Shannon entropy $H(f)$ is defined to be
\begin{align}\label{eq:kNNE}
\widehat{H}_{N,k} 
	& = \frac{1}{N}\sum_{i=1}^{N}\log \left[\rho_k^m(X_i,\mathcal{X}_N) V_m  (N-1){e}^{-\psi(k)}\right], 
\end{align}
where $\psi(x)=\Gamma'(x)/\Gamma(x)$ is the digamma function and $V_m=\pi^{m/2}/\Gamma(m/2+1)$ is the volume of the unit ball in $\Re^m$.
For $k=1$, this reduces to 
\begin{equation}\label{eq:KL}
\widehat{H}_{N,1} = \frac{m}{N}\sum_{i=1}^N \log \rho_1(X_i,\mathcal{X}_N) + \log V_m + \gamma + \log (N-1).
\end{equation}
where $\gamma=-\psi(1)\approx 0.577216$ is the Euler-Mascheroni constant.
The estimator \eqref{eq:KL} was introduced by  \citet{kozachenko1987} while the general estimator \eqref{eq:kNNE} was first considered by \citet{goria2005}. The main properties of \eqref{eq:kNNE} have been studied by \cite{leonenko2008,leonenko2010,penrose2011,delattre2017,gao2018,bulinski2019KL,bulinski2019shannon,berrett2019a} and \cite{berrett2019b}.

Convergence in mean-square for the case $k=1$ was proved by \cite[Theorem 2.1.ii]{penrose2013}. 
\begin{theorem}[\cite{penrose2013}]
\label{thm:penrose}
Suppose that $\expe(\|X\|^{\alpha}) < \infty$ for some $\alpha>0$ and $f(x)\leq M$ for some $M>0$. Then
\[
\expe\left[\widehat{H}_{N,1}-H(f)\right]^2 \rightarrow 0 \quad \text{as $N\to\infty$.}
\]
 \end{theorem}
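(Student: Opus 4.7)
Writing $Y_N^{(i)}:=(N-1)V_m \rho_1^m(X_i,\mathcal{X}_N)$, the estimator reads $\widehat{H}_{N,1}=\frac{1}{N}\sum_{i=1}^N\log Y_N^{(i)}+\gamma$. I would decompose
\[
\expe\bigl[\widehat{H}_{N,1}-H(f)\bigr]^2 = \bigl(\expe\widehat{H}_{N,1}-H(f)\bigr)^2 + \var(\widehat{H}_{N,1})
\]
and, by exchangeability, reduce to
\[
\var(\widehat{H}_{N,1}) = \tfrac{1}{N}\var(\log Y_N^{(1)}) + \tfrac{N-1}{N}\cov(\log Y_N^{(1)},\log Y_N^{(2)}),\qquad \expe\widehat{H}_{N,1}=\expe\log Y_N^{(1)}+\gamma,
\]
so it suffices to prove (i) $\expe\log Y_N^{(1)}\to H(f)-\gamma$, (ii) $\sup_N \var(\log Y_N^{(1)})<\infty$, and (iii) $\cov(\log Y_N^{(1)},\log Y_N^{(2)})\to 0$.

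For (i) and (ii) the starting point is the exact conditional distribution: given $X_1=x$, the survival function $\prob(Y_N^{(1)}>t\mid X_1=x)=(1-\mu_f(B(x,r_N(t))))^{N-1}$ with $r_N(t)=(t/((N-1)V_m))^{1/m}$ tends at every Lebesgue point of $f$ to $e^{-tf(x)}$, so $Y_N^{(1)}\mid X_1=x\Rightarrow \mathrm{Exp}(f(x))$, whose logarithm has mean $-\log f(x)-\gamma$. Upgrading this to $L^1$ and $L^2$ convergence requires uniform integrability of $(\log Y_N^{(1)})^2$: the hypothesis $f\le M$ gives the $N$-free lower-tail bound $\prob(Y_N^{(1)}<s\mid X_1=x)\le Ms$ (hence exponential tails of $(\log Y_N^{(1)})^-$), and the moment assumption $\expe\|X\|^\alpha<\infty$ together with $\rho_1\le\|X_1\|+\|X_2\|$ controls $\expe\rho_1^\alpha$ uniformly, which combined with a truncation at the typical scale $\rho_1\sim N^{-1/m}$ controls the upper tail of $\log Y_N^{(1)}$ uniformly in $N$. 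Dominated convergence then yields (i) and (ii).

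For (iii), which is the genuine content of the theorem, I would use a stabilisation/localisation argument. The statistic $Y_N^{(i)}$ is determined by the configuration of $\mathcal{X}_N$ inside the ball $B(X_i,\rho_1(X_i,\mathcal{X}_N))$, whose radius is of order $N^{-1/m}$ in probability. On the event $A_N=\{\|X_1-X_2\|>2N^{-\beta}\}$ with $\beta\in(0,1/m)$, these two balls are disjoint with high probability; conditionally on $(X_1,X_2)$, the restriction of $\mathcal{X}_N$ to small neighbourhoods of $X_1$ and $X_2$ can be coupled with two \emph{independent} Poisson processes of local intensities $Nf(X_1)$ and $Nf(X_2)$, rendering $\log Y_N^{(1)}$ and $\log Y_N^{(2)}$ asymptotically independent on $A_N$. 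The complement satisfies $\prob(A_N^c)=O(N^{-\beta m})$ by the ball-volume bound $f\le M$, and its contribution to the covariance is absorbed by Cauchy--Schwarz using (ii).

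The chief obstacle is (iii): (i) and (ii) are quantitative but essentially pointwise and reduce to standard uniform integrability under the two hypotheses, whereas proving $\cov(\log Y_N^{(1)},\log Y_N^{(2)})\to 0$ demands a genuine asymptotic independence of two nearest-neighbour statistics via a stabilisation/Poisson-approximation argument for the $1$-NN graph, which is the real analytic content of the theorem.
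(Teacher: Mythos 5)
Your plan is sound in outline, but it takes a genuinely different route from the paper, which in fact does not prove Theorem~\ref{thm:penrose} at all: the statement is imported directly from \cite[Theorem 2.1.ii]{penrose2013}, and the paper's own work (the proof of Theorem~\ref{thm:main}) handles general $k$ by invoking the $L^q$ law of large numbers for stabilizing functionals (Theorem~\ref{thm:penrose_specialcase}, a corollary of \cite[Theorem 3.1]{penrose2013}) and then checking exactly two things: that the limit $\int_{\Re^m}\expe\big[l(0,\mathcal{P}_{f(x)})\big]f(x)\,dx$ equals $H(f)$, via the Erlang law of $\rho_k(0,\mathcal{P}_\lambda)$, and that the uniform moment condition \eqref{eq:cond-upperbound} holds, via \cite[Lemma 7.5]{penrose2013} for the negative moments of $\rho_k$ and the proof of \cite[Theorem 2.3]{penrose2011} for the positive moments. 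Your steps (i) and (ii) correspond precisely to these two checks, while your step (iii) --- the decay of $\cov(\log Y_N^{(1)},\log Y_N^{(2)})$ via localisation and Poisson coupling --- is exactly the content that the paper delegates to the general stabilization theorem and never reproves. What your route buys is self-containedness and an explicit picture of why the variance vanishes; what the paper's route buys is that the same two moment checks immediately yield the result for every fixed $k\geq1$, not just $k=1$.

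Two places in your sketch need care before it would close. First, the upper tail: $\rho_1\le\|X_1\|+\|X_2\|$ gives $\sup_N\expe\,\rho_1^{\alpha}<\infty$, but $\log Y_N^{(1)}=\log(N-1)+\log V_m+m\log\rho_1$ carries an extra $\log N$, so a uniform-in-$N$ bound on $(\log Y_N^{(1)})_+^2$ really requires $\sup_N\expe\big[(N^{1/m}\rho_1)^{\delta}\big]<\infty$ for some $\delta>0$, which holds only for $\delta<m\alpha/(m+\alpha)$ and is the nontrivial estimate the paper borrows from \cite{penrose2011}; your ``truncation at the typical scale'' is this argument and is not a one-liner. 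Second, absorbing the contribution of $A_N^c$ by Cauchy--Schwarz against an event of vanishing probability needs more than $\sup_N\var(\log Y_N^{(1)})<\infty$: it needs uniform integrability of $(\log Y_N^{(1)})^2$, equivalently a uniformly bounded $p$-th moment with $p>2$ as in \eqref{eq:cond-upperbound}; you do establish this en route, but it is the UI statement, not the bounded variance, that must be invoked there.
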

\begin{remark}
The condition of boundedness for the density $f$ is not explicitly stated by \cite[Theorem 2.4.ii]{penrose2013}. In Appendix~\ref{appx:lowerbound} we give an example of a density $f$ with bounded support and for which $H(f)$ is unbounded.
 \end{remark}
 
In this paper we prove the analogous result for arbitrary $k\geq 1$.
To this end we write \eqref{eq:kNNE} as
\[
\widehat{H}_{N,k} = \frac{1}{N}\sum_{x\in \mathcal{X}_N}l\left(N^{\frac{1}{m}} x,N^{\frac{1}{m}} \mathcal{X}_N\right),\]
where
\[
l(x,\mathcal{X}):=\log\left(\rho_k^m(x,\mathcal{X})V_me^{-\psi(k)}\right), x\in \Re^m.
\]
First, we require the following corollary of \cite[Theorem 3.1]{penrose2013}.
\begin{theorem}
\label{thm:penrose_specialcase}
Let $k\geq 1$ and $q=1$ or $q=2$, and suppose there exists $p\geq q$ such that 
\begin{equation}
\label{eq:cond-upperbound}
\sup_{N\geq k} \expe\left|l\left(N^{\frac{1}{m}} X_1,N^{\frac{1}{m}} \mathcal{X}_N\right)\right|^p<\infty.
\end{equation} 
Then we have $L^q$ convergence,
\begin{equation}
\nonumber
\frac{1}{N}\sum_{x\in \mathcal{X}_N}l\left(N^{\frac{1}{m}} x,N^{\frac{1}{m}} \mathcal{X}_N\right) 
    \to \int_{\Re^m} \expe\left[l(0,\mathcal{P}_{f(x)})\right]f(x)\,dx,
\end{equation}
as $N\to\infty$, where $\mathcal{P}_{\lambda}$ denotes a homogeneous Poisson point process of intensity $\lambda>0$ on $\Re^m$.
\end{theorem}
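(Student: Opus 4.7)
The plan is to deduce the claim as a direct corollary of Penrose's general limit theorem for stabilizing functionals on rescaled binomial point processes (Theorem 3.1 of \cite{penrose2013}), specialised to the scoring function $\xi=l$. That general result delivers $L^q$ convergence of additive functionals of the form $N^{-1}\sum_{x\in\mathcal{X}_N}\xi(N^{1/m}x,N^{1/m}\mathcal{X}_N)$ to the spatial average $\int_{\Re^m}\expe[\xi(0,\mathcal{P}_{f(x)})]f(x)\,dx$, provided $\xi$ is translation invariant, is stabilising on the limiting Poisson process, and satisfies a uniform $L^p$ moment bound with $p\geq q$. My task therefore reduces to checking that the specific functional $l$ satisfies these requirements.

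The verification is straightforward. First, $l(x,\mathcal{X})$ depends on $\mathcal{X}$ only through the multiset of distances from $x$ to the points of $\mathcal{X}\setminus\{x\}$, hence $l(x+t,\mathcal{X}+t)=l(x,\mathcal{X})$ for every $t\in\Re^m$, giving translation invariance. Second, $\rho_k(x,\mathcal{X})$ is determined exclusively by the $k$ nearest points of $\mathcal{X}$ to $x$, so $l(x,\mathcal{X})$ is unaffected by any modification of $\mathcal{X}$ outside the closed ball $B(x,\rho_k(x,\mathcal{X}))$; for the Poisson process $\mathcal{P}_\lambda$ this radius is almost surely finite, which is exactly the stabilization hypothesis required by Penrose. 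Third, the uniform $p$-moment bound is the assumed condition \eqref{eq:cond-upperbound}, so nothing further needs to be proved there.

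As a consistency check I would identify the limit explicitly. Recalling that $V_m\rho_k^m(0,\mathcal{P}_\lambda)\lambda$ follows a $\mathrm{Gamma}(k,1)$ distribution, one has $\expe[\log(V_m\rho_k^m(0,\mathcal{P}_\lambda))]=\psi(k)-\log\lambda$, whence $\expe[l(0,\mathcal{P}_\lambda)]=-\log\lambda$. Substituting into the right-hand side recovers $\int_{\Re^m}(-\log f(x))f(x)\,dx=H(f)$, confirming that the $k$-NNE \eqref{eq:kNNE} targets the intended functional and that the identification in the statement is internally consistent.

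I expect the main obstacle to be administrative rather than mathematical: the precise conditions of Penrose's Theorem 3.1 (notably the exact form of its stabilization radius, its handling of edge effects when $X_i$ lies near the boundary of $\supp f$, and the joint coupling of binomial and Poisson inputs under the $N^{1/m}$ rescaling) must be matched against the functional $l$ at hand. Once these conditions are aligned, the substantive probabilistic work is contained in Penrose's theorem and no new estimates are required beyond the uniform moment bound \eqref{eq:cond-upperbound} that is explicitly assumed.
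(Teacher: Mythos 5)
Your proposal follows exactly the route the paper takes: the paper states this result without further argument as a direct corollary of \cite[Theorem 3.1]{penrose2013} applied to the scoring function $l$, relying on the same three ingredients (translation invariance, stabilization of the $k$th nearest-neighbour radius on the homogeneous Poisson process, and the assumed uniform $p$th-moment bound \eqref{eq:cond-upperbound}). Your explicit verification of translation invariance and stabilization, and your computation of $\expe[l(0,\mathcal{P}_\lambda)]=-\log\lambda$, are details the paper defers (the latter appears only later, inside the proof of Theorem~\ref{thm:main}), so your write-up is if anything slightly more complete than the paper's.
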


\begin{theorem}[Main theorem]
\label{thm:main}
Suppose that $\expe\|X\|^{\alpha}<\infty$ for some $\alpha>0$ and $f(x)\leq M$ for some $M>0$. Then for any fixed $k\geq 1$,
\begin{equation}
\expe\left[\widehat{H}_{N,k}-H(f)\right]^2 \to 0
\quad\text{as $N\to \infty$.}
\end{equation}
\end{theorem}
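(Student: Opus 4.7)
My plan is to apply Theorem~\ref{thm:penrose_specialcase} with $q=2$, which reduces the main theorem to two subtasks: (i) verifying the uniform $L^p$ moment bound \eqref{eq:cond-upperbound} for some $p\geq 2$, and (ii) identifying the limit in Theorem~\ref{thm:penrose_specialcase} as $H(f)$.

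Subtask (ii) is a short Poisson computation. For a homogeneous Poisson process $\mathcal{P}_\lambda$ of intensity $\lambda>0$, the radius to the $k$th nearest point of the origin satisfies $V_m\lambda\rho_k^m(0,\mathcal{P}_\lambda)\sim\mathrm{Gamma}(k,1)$, giving $\expe\log(V_m\lambda\rho_k^m)=\psi(k)$ and hence $\expe[l(0,\mathcal{P}_\lambda)]=-\log\lambda$. Substituting $\lambda=f(x)$, the limiting integral collapses to $-\int f(x)\log f(x)\,dx=H(f)$, exactly as required.

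The crux of the argument is subtask (i). Writing $U_N:=NV_m\rho_k^m(X_1,\mathcal{X}_N)$, so that $l(N^{1/m}X_1,N^{1/m}\mathcal{X}_N)=\log U_N-\psi(k)$, it suffices to prove $\sup_{N\geq k}\expe|\log U_N|^p<\infty$ for some $p\geq 2$. I would split this moment at $U_N=1$ and control the two tails using the two hypotheses separately. The small-value tail is the easier half: the event $\{U_N\leq u\}$ forces at least $k$ of the $X_2,\ldots,X_N$ into the ball $B(X_1,(u/(NV_m))^{1/m})$, so a union bound conditional on $X_1$ combined with $f\leq M$ yields $\prob(U_N\leq u\mid X_1)\leq(Mu)^k/k!$ uniformly in $N$, and integrating $|\log u|^p$ against this polynomial tail near the origin gives a uniform bound.

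The large-value tail is the main obstacle and is where the moment hypothesis $\expe\|X\|^\alpha<\infty$ is used. Conditional on $X_1=x$, the event $\{\rho_k(x,\mathcal{X}_N)>r\}$ is a binomial lower-tail event with success probability $p(x,r)=\int_{B(x,r)}f(y)\,dy$, and I would decompose according to whether $p(X_1,r)$ is large or small: on $\{p(X_1,r)\geq 1/2\}$ a Chernoff bound gives decay exponential in $N$, while on the complementary event a Markov bound exploiting $\expe[1-p(X_1,r)]=\prob(\|X_1-X_2\|>r)\leq Cr^{-\alpha}$ yields control of order $r^{-\alpha}$. The hardest step will be translating these estimates into an $N$-uniform tail bound on $\prob(U_N>u)$ via the substitution $r=(u/(NV_m))^{1/m}$, since a naive combination picks up an unwanted factor of $N^{\alpha/m}$; absorbing this factor will likely require an additional split of the $u$-range (using the bounded-density hypothesis on small-to-moderate $u$ and the moment hypothesis on large $u$), after which the integration-by-parts identity expressing $\expe|\log U_N|^p\mathbbm{1}_{U_N>1}$ as an integral of $(\log u)^{p-1}u^{-1}\prob(U_N>u)$ will deliver the required uniform moment bound and complete the proof.
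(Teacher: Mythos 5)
Your overall architecture is exactly the paper's: apply Theorem~\ref{thm:penrose_specialcase} with $q=2$, identify the limiting integral as $H(f)$ via the Gamma$(k,1)$ law of $\lambda V_m\rho_k^m(0,\mathcal{P}_\lambda)$ (so that $\expe[l(0,\mathcal{P}_\lambda)]=-\log\lambda$), and then verify the uniform $p$th moment condition \eqref{eq:cond-upperbound} by splitting at $\rho_k=1$. Your treatment of the small-value tail is correct and is in fact a self-contained re-derivation of what the paper imports as \cite[Lemma 7.5]{penrose2013}: the union bound $\prob(U_N\leq u\mid X_1)\leq (Mu)^k/k!$ uses $f\leq M$ in the right direction and integrates to an $N$-uniform bound.

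The genuine gap is in the large-value tail, and you have correctly located it but not closed it. Your two-case bound gives $\prob(U_N>u)\lesssim N^{k-1}e^{-cN}+N^{\alpha/m}u^{-\alpha/m}$, and feeding this into $p\int_1^\infty(\log u)^{p-1}u^{-1}\prob(U_N>u)\,du$ yields a bound of order $(\log N)^p$, not a uniform one. Your proposed repair --- ``using the bounded-density hypothesis on small-to-moderate $u$'' --- cannot work: boundedness of $f$ gives only an \emph{upper} bound $p(x,r)\leq MV_mr^m$ on ball probabilities, which controls the event that $\rho_k$ is \emph{small}; to make $\prob(\rho_k>r)$ small you need a \emph{lower} bound on the local mass, which $f\leq M$ does not provide. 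What is actually needed in the intermediate range $1\leq u\lesssim N$ is a covering-type estimate of the form $\int_{\Re^m}e^{-cNF(B(x,r))}f(x)\,dx\leq C(Nr^m)^{-1}$, valid for any density, which combined with the moment condition for the far range gives $\sup_{N\geq k}\expe\,\rho_k^{\delta}\bigl(N^{1/m}X_1,N^{1/m}\mathcal{X}_N\bigr)<\infty$ for $0<\delta<mr_c(f)/(m+r_c(f))$ --- note that this threshold on $\delta$ encodes precisely the tension between the $N^{\alpha/m}$ factor and the $u^{-\alpha/m}$ decay that you ran into. The paper sidesteps all of this by quoting that bound directly from the proof of \cite[Theorem 2.3]{penrose2011}; as written, your argument for the upper tail is a plan rather than a proof, and its stated closing mechanism is the wrong tool.
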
 

\begin{proof}
We apply Theorem~\ref{thm:penrose_specialcase}. 
First, we show that
$
H(f) = \displaystyle\int_{\Re^m}\expe\big[l(0,\mathcal{P}_{f(x)})\big]f(x)\,dx,
$
where
\[
l\left(0,\mathcal{P}_{\lambda}\right)
	= m \log \rho_k(0,\mathcal{P}_\lambda) + \log V_m -\psi(k).
\]
Denote by $B_t(0)$ the (Euclidean) ball of radius $t$ centred at $0$ i.e, $B_t(0)=\{y\in \Re^m, \|y\|\leq t\}.$ 
The random variable  $\rho_k(0,\mathcal{P}_\lambda)$ is the distance from $0$ to the $k$th point of $\mathcal{P}_\lambda$, and thus has Erlang distribution with parameters $k$ and $\lambda|B_t(0)| = \lambda V_mt^m$, that is
\begin{align*}
&\prob\big(\rho_k(0,\mathcal{P}_\lambda)\leq t\big) 
	 = \prob\big(\mathcal{P}_\lambda\cap B_{t}(0) \geq k\big) \\
	& = 1 - \sum_{j=0}^{k-1}\frac{1}{j!}\big(\lambda|B_{t}(0)|\big)^j e^{-\lambda|B_{t}(0)|} \\
	& = 1 - \sum_{j=0}^{k-1}\frac{1}{j!}(\lambda V_mt^m)^j e^{-\lambda V_mt^m}
	\qquad (t\geq 0).
\end{align*} 
 Then
\begin{align*}
    &m\expe[\log \rho_k(0,\mathcal{P}_\lambda)]\\
    &=\int_{0}^\infty \log t^m \frac{(\lambda V_m)^k  (t^m)^{(k-1)}}{(k-1)!}e^{-\lambda V_m t^m} m t^{m-1}dt\\
    &=-\log (\lambda V_m) + \int_{0}^\infty \log y \frac{y^{k-1}}{(k-1)!}e^{-y}dy\\
    &=-\log \lambda  -\log V_m  + \psi(k).
\end{align*}

Hence $\expe\big[l(0,\mathcal{P}_{\lambda})\big] = -\log\lambda$ and thus $H(f)$ equals
\[
 -\int_{\Re^m} f(x)\log f(x)\,dx 
	 = \int_{\Re^m}\expe\big[l(0,\mathcal{P}_{f(x)})\big]f(x)\,dx.
\]

Second, we check condition \eqref{eq:cond-upperbound}. 
Note that for every $\delta\in (0,1)$ and $p>1$ there exists $C>0$ such that 
\[
|\log t|^p\leq Ct^{-\delta}\mathbbm{1}_{[0,1]}(t)+C t^{\delta}\mathbbm{1}_{[1,\infty)}(t),\qquad t>0.
\]
Then because
\begin{align*}
|l(x,\mathcal{X})|^p
	& = \big|\log V_m - \psi(k) + \log\rho_k^m(x,\mathcal{X})\big|^p \\
	& \leq \big|\log V_m - \psi(k)\big|^p  
		+ \big|\log\rho_k^m(x,\mathcal{X})\big|^p 
\end{align*}
we have
\begin{align}
\nonumber&\frac{1}{2^{p-1}}\expe\left|l\left(N^{\frac{1}{m}} X_1,N^{\frac{1}{m}} \mathcal{X}_N\right)\right|^p \\
	& \leq \left|\log V_m - \psi(k)\right|^p+ \expe\left|\log \rho_k^m\left(N^{\frac{1}{m}} X_1,N^{\frac{1}{m}} \mathcal{X}_N\right) \right|^p \nonumber \\
	&\leq \left|\log V_m - \psi(k)\right|^p \nonumber \\
	& +  C\expe \rho_k^{-\delta}\left(N^{\frac{1}{m}} X_1,N^{\frac{1}{m}} \mathcal{X}_N\right) 
		\mathbbm{1}_{[0,1]}\left[\rho_k^{\delta}\left(N^{\frac{1}{m}} X_1,N^{\frac{1}{m}} \mathcal{X}_N\right)\right] 
	\label{thm2:eq3} \\
	& +  C\expe \rho_k^{\delta}\left(N^{\frac{1}{m}} X_1,N^{\frac{1}{m}} \mathcal{X}_N\right) 
		\mathbbm{1}_{[1,\infty)}\left[\rho_k^{\delta}\left(N^{\frac{1}{m}} X_1,N^{\frac{1}{m}} \mathcal{X}_N\right)\right].
	\label{thm2:eq4}    
\end{align}

Term \eqref{thm2:eq3} is finite because
\begin{align}
    \nonumber&\sup_{N\geq k} \expe \rho_k^{-\delta}\left(N^{\frac{1}{m}} X_1,N^{\frac{1}{m}} \mathcal{X}_N\right) \mathbbm{1}_{[0,1]}\left(\rho_k^{\delta}\left(N^{\frac{1}{m}} X_1,N^{\frac{1}{m}} \mathcal{X}_N\right)\right) \\
    \label{cond2}&\leq \sup_{N\geq k} \expe \rho_1^{-\delta}\left(N^{\frac{1}{m}} X_1,N^{\frac{1}{m}} \mathcal{X}_N\right) <\infty,
\end{align}
where \eqref{cond2} is ensured  by  \cite[Lemma 7.5]{penrose2013} since $f$ is bounded and $\delta\in (0,m).$ 

Let $r_c(f):=\sup\{r\geq 0: \expe\|X_1\|^r <\infty\}$.
In the proof of \cite[Theorem 2.3]{penrose2011} we see that
if $r_c(f)>0$ and $0<\delta<mr_c(f)(m+r_c(f))^{-1}$, then 
\[
\sup_{N\geq k} \expe\rho_k^{\delta}\left(N^{\frac{1}{m}} X_1,N^{\frac{1}{m}} \mathcal{X}_N\right)<\infty.
\] 

Thus, term  \eqref{thm2:eq4} is finite.
\end{proof}

\section{The generalized Gaussian distribution}\label{sec:GG}
The \emph{multivariate exponential power distribution} $\MEP_m(s,\mu,\Sigma)$ on $\Re^m$ 
\cite{solaro2004}
has the density function
\begin{equation}\label{eq:mep}
f(x\,;m,s,\mu,\Sigma) 
	= \frac{\Gamma(m/2+1)}{\pi^{m/2}\Gamma(m/s+1)2^{m/s}\sqrt{\det \Sigma}}
	\exp\left(-\frac{1}{2}\Big[(x-\mu)^{\rm T}\Sigma^{-1}(x-\mu)\Big]^{s/2}\right)
\end{equation}
where $\mu\in\Re^m$ is the mean vector, $\Sigma$ is an $m\times m$ positive definite matrix, $s>0$ is a shape parameter \cite{solaro2004}, and variance-covariance matrix $V =\beta\Sigma$ where 
\begin{equation}\label{eq:mep-scalefactor}
\beta(m,s) = \frac{2^{2/s}\Gamma\big[(m+2)/s\big]}{m\Gamma(m/s)}.
\end{equation}
Note that $s=2$ corresponds to the multivariate normal distribution $N(\mu,\Sigma)$ on $\Re^m$, while $s=1$ corresponds to the multivariate Laplace distribution. 
Taking $\mu$ to be the null vector and $\Sigma$ to be the identity matrix, we obtain the  \emph{isotropic exponential power distribution} $\IEP_m(s)$ on $\Re^m$, 
\begin{equation}\label{eq:sep}
f(x\,;m,s) 
	= \frac{\Gamma(m/2+1)}{\Gamma(m/s+1)\pi^{m/2}2^{m/s}}
	\exp\left(-\frac{1}{2}\|x\|^s\right),
\end{equation}
$ x\in \Re^m,$ where $\|\cdot\|$ denotes the Euclidean norm on $\Re^m$. 

Applying the scaling $x\mapsto(2\tau)^{-1/s}x$ for $\tau>0$ yields the \emph{generalized Gaussian} distributions $\GG_\tau(m,s)$ on $\Re^m$, with density functions
\begin{equation}\label{eq:GGpdf}
f_c(x;m,s) = c(m,s)\exp\left(-\tau\|x\|^s\right), x\in \Re^m,\\
\end{equation}
where
\[
	\quad c(m,s) = \frac{\Gamma(m/2+1)\tau^{m/s}}{\Gamma(m/s+1)\pi^{m/2}},
\]
and taking $\tau=1/s$ yields the canonical distribution
\begin{equation}\label{eq:GGpdf-canonical}
f(x;m,s) = c_0(m,s)\exp\left(-\frac{\|x\|^s}{s}\right),  x\in \Re^m,
\end{equation}
where
\[
c_0(m,s) = \frac{\Gamma(m/2+1)}{\Gamma(m/s+1)\pi^{m/2}s^{m/s}}.
\]

\subsubsection*{Moments}
A random vector $X\in\Re^m$ is called \emph{isotropic} if its density $f$ can be written as $f(x) = \tilde{f}(\|x\|)$ for some function $\tilde{f}:\Re\to[0,\infty)$ called the \emph{radial density}, where $\|\cdot\|$ is the Euclidean norm on $\Re^m$.
If $X$ is isotropic and $g: \Re\rightarrow \Re$ is a Borel function, it is easy to show that
\begin{equation}
    \expe\left[g(\|X\|)\right] = \int_{\Re^m}g(\|x\|)f(x)\,dx = \frac{2\pi^{m/2}}{\Gamma(m/2)}\int_{0}^{\infty}g(r)\tilde{f}(r)r^{m-1}\,dr
\end{equation}
provided the integrals exist. 
In particular, the moments of order $s>0$ are given by
\begin{equation}\label{eq:s-moment}
\expe(\|X\|^s) 
    = \frac{2\pi^{m/2}}{\Gamma(m/2)}\int_{0}^{\infty}r^{m+s-1}\tilde{f}(r)\,dr 
\end{equation}
provided the integrals exist.

\begin{lemma}\label{lem:moments-GG}
If $X\sim \GG_\tau(m,s)$, then $\expe(\|X\|^s) = m/s\tau$.
\end{lemma}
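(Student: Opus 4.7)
The plan is to apply the isotropic moment formula \eqref{eq:s-moment} directly to the radial density of $\GG_\tau(m,s)$ and reduce the resulting integral to a gamma function by a power-substitution. Since $f_c(x;m,s) = c(m,s)\exp(-\tau\|x\|^s)$ depends only on $\|x\|$, the distribution is isotropic with radial density $\tilde f(r) = c(m,s)e^{-\tau r^s}$, so \eqref{eq:s-moment} gives
\[
\expe(\|X\|^s) = \frac{2\pi^{m/2}}{\Gamma(m/2)}\, c(m,s) \int_0^\infty r^{m+s-1} e^{-\tau r^s}\, dr .
\]

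Next I would perform the change of variables $u = \tau r^s$, under which $r = (u/\tau)^{1/s}$ and $r^{m+s-1}\,dr = (u/\tau)^{m/s}\,du/(s\tau)$. This converts the integral into
\[
\int_0^\infty r^{m+s-1}e^{-\tau r^s}\,dr = \frac{1}{s\,\tau^{\,m/s+1}} \int_0^\infty u^{m/s} e^{-u}\, du = \frac{\Gamma(m/s+1)}{s\,\tau^{\,m/s+1}} .
\]

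Finally I would substitute the explicit form $c(m,s) = \Gamma(m/2+1)\tau^{m/s}/(\Gamma(m/s+1)\pi^{m/2})$ and cancel. The factors $\Gamma(m/s+1)$, $\pi^{m/2}$, and $\tau^{m/s}$ all disappear, and using $\Gamma(m/2+1)=(m/2)\Gamma(m/2)$ yields
\[
\expe(\|X\|^s) = \frac{2\Gamma(m/2+1)}{s\tau\,\Gamma(m/2)} = \frac{m}{s\tau}.
\]

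There is no real obstacle here; the computation is a routine gamma-integral evaluation, and the only thing to watch is careful bookkeeping of the constant $c(m,s)$ so that the correct cancellation $\tau^{m/s}\cdot \tau^{-m/s-1} = \tau^{-1}$ produces the stated $1/\tau$ factor.
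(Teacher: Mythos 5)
Your proposal is correct and follows essentially the same route as the paper's proof: both apply the isotropic moment formula \eqref{eq:s-moment} to the radial density $c(m,s)e^{-\tau r^s}$ and evaluate the resulting integral via the substitution $u=\tau r^s$, reducing it to $\Gamma(m/s+1)$ and cancelling constants. The bookkeeping of $c(m,s)$ and the final cancellation $\tau^{m/s}\cdot\tau^{-m/s-1}=\tau^{-1}$ are all accurate.
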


\begin{proof}
If $X\sim \GG_\tau(m,s)$ then $X$ is isotropic and has radial density function
\[
\tilde{f}(r) = \frac{\Gamma(m/2+1)\tau^{m/s}}{\Gamma(m/s+1)\pi^{m/2}}\exp(-\tau r^s).
\]
Hence by \eqref{eq:s-moment} we have
\begin{align*}
\expe(\|X\|^s) 
	&= \frac{2\pi^{m/2}}{\Gamma(m/2)}\int_{0}^{\infty}r^{s}\tilde{f}(r)r^{m-1}\,dr\\
	& = \frac{m\tau^{m/s}}{\Gamma(m/s+1)}\int_{0}^{\infty}r^{m+s-1}\exp(-\tau r^s)\,dr
\end{align*}

and changing the variable of integration to $t=\tau r^s$ yields
\begin{align*}
\expe(\|X\|^s) 
	& = \frac{m}{s\tau \,\Gamma(m/s+1)}\int_{0}^{\infty}t^{m/s}e^{-t}dt
	= \frac{m}{s\tau }.
\end{align*}
\end{proof}

\section{A maximum entropy principle for $GG_\tau (m,s)$}\label{sec:maxent}
It is well known \cite{kapur1989} that among all distributions on $\Re^m$ whose densities $f$ are supported on the whole of $\Re^m$ and whose mean and covariance matrix are fixed at zero and $\Sigma$ respectively, the differential entropy $H(f)$ is maximised by the multivariate Gaussian distribution $N(0,\Sigma)$ on $\Re^m$, and thus
\begin{equation} 
\label{eq:maxent}
H(f)\leq \log\left[(2\pi e)^{m/2}\sqrt{\det\Sigma}\right].
\end{equation}

We now prove an analogous result for the generalized Gaussian distribution.

\begin{theorem}
\label{thm:maxentGG}
Let $X\in\Re^m$ be a random vector, whose density $f$ is supported on the whole of $\Re^m$, and for which there exists some $s>0$ such that $\expe(\|X\|^s) < \infty$. Then $H(f)$ is finite and satisfies
\[
H(f)\leq \frac{m}{s}\log\big(c_1(m,s)\expe\|X\|^s\big),
\]
where
\[c_1(m,s) = \left(\frac{\pi^{m/2}\Gamma(m/s+1)}{\Gamma(m/2+1)}\right)^{s/m}\left(\frac{se}{m}\right)
\]
with equality if and only if $X \sim\GG_\tau(m,s)$ with $\tau =m/(s\expe\|X\|^s)$.
\end{theorem}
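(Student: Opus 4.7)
The plan is the classical maximum entropy argument via the non-negativity of the Kullback--Leibler divergence (Gibbs' inequality). The first step is to pick the correct comparison density: let $g$ be the $\GG_\tau(m,s)$ density from~\eqref{eq:GGpdf} with $\tau = m/(s\,\expe\|X\|^s)$. By Lemma~\ref{lem:moments-GG} this choice matches the $s$th absolute moment precisely, $\expe_g\|Y\|^s = m/(s\tau) = \expe_f\|X\|^s$, and this moment-matching is what makes the ensuing computation collapse.

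The heart of the proof is Gibbs' inequality itself: applying Jensen's inequality to the concave function $\log$ against the probability measure with density $f$, and using $\int g\,dx = 1$, gives
\[
\int f\log(g/f)\,dx \;\leq\; \log\!\int g\,dx \;=\; 0,
\]
with equality iff $f = g$ almost everywhere. Rearranging, $H(f) \leq -\int f\log g\,dx$. Because $\log g(x) = \log c(m,s) - \tau\|x\|^s$, the right-hand side reduces to the finite quantity $-\log c(m,s) + \tau\,\expe\|X\|^s = -\log c(m,s) + m/s$. From here it is pure algebra: insert $c(m,s) = \Gamma(m/2+1)\tau^{m/s}/(\Gamma(m/s+1)\pi^{m/2})$ and $\tau = m/(s\expe\|X\|^s)$, pull out the factor $m/s$, and the bound becomes exactly $\frac{m}{s}\log\bigl(c_1(m,s)\,\expe\|X\|^s\bigr)$. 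The equality case is inherited from Gibbs (forcing $f=g$ a.e.) while the value of $\tau$ is uniquely determined by the moment-matching identity, yielding the claimed extremal law.

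The main obstacle is the claim that $H(f)$ is \emph{finite}, rather than merely bounded above. The Gibbs bound immediately rules out $H(f) = +\infty$, but excluding $H(f) = -\infty$ requires additional care, since under only the $s$th-moment hypothesis $\int f(\log f)^+$ need not be finite a priori. The cleanest route is to verify $D(f\|g) < \infty$ separately, and then conclude via $H(f) = -\int f \log g\,dx - D(f\|g)$ with both terms finite. A practical way to bound $D(f\|g)$ is to split on $\{f\leq g\}$ and $\{f>g\}$: on the first set the elementary bound $\log t \leq t-1$ controls the negative part of the integrand, and on the second set the exponential decay of $g$ together with the moment assumption $\expe\|X\|^s<\infty$ supplies the necessary control on the positive part.
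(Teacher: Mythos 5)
Your main argument is exactly the paper's proof: choose the comparison density $f^*$ to be $\GG_\tau(m,s)$ with $\tau=m/(s\,\expe\|X\|^s)$ so that the $s$th moments match, apply Gibbs' inequality $H(f)\leq-\int f\log f^*$ via Jensen, compute $-\int f\log f^*=m/s-\log c(m,s)$ using Lemma~\ref{lem:moments-GG}, and read off the equality case from the equality condition in Jensen. That part is correct and identical in route and in detail to what the authors do.

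Where you go beyond the paper is the finiteness discussion, and there your proposed repair does not work. You correctly identify that the Gibbs bound only rules out $H(f)=+\infty$ and that one must separately exclude $H(f)=-\infty$, equivalently show $D(f\|f^*)<\infty$. But the split you suggest cannot close the gap: on $\{f>f^*\}$ the positive part of the integrand is $f\log(f/f^*)=f\log f+f\bigl(\tau\|x\|^s-\log c(m,s)\bigr)$, and while the second summand is integrable by the moment hypothesis, the first is $f(\log f)^+$ wherever $f>1$, which the hypotheses do not control -- the problem lives where $f$ is large, not where $f^*$ is small, so the exponential decay of $f^*$ is irrelevant there. In fact no argument can succeed from the stated hypotheses alone: the paper's own Appendix~\ref{appx:lowerbound} exhibits a density with finite first moment and $H(f)=-\infty$, and mixing it with a Gaussian produces a density supported on all of $\Re^m$ with finite $s$th moment and $H(f)=-\infty$. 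So the finiteness claim requires an additional assumption (e.g.\ $\int f(\log f)^+<\infty$, or boundedness of $f$ as in \eqref{eq:log_f}); the inequality itself should then be read in $[-\infty,\infty)$. The paper's proof silently ignores this point, so your instinct to flag it is sound -- but the fix you sketch is not a fix.
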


\begin{proof}
Let $X$ and $Z$ be two random vectors whose density functions, $f$ and $f^*$ respectively, are supported on the whole of $\Re^m$, and for which there exists some $s>0$ with $\expe\|X\|^s = \expe\|Z\|^s < \infty$. First, we observe that
\begin{equation}\label{eq:shannon-upperbound}
H(f) 
	\leq - \int_{\Re^m}f(x)\log f^*(x)\,dx,
\end{equation}
with equality if and only if $f = f^*$ almost everywhere. This follows by Jensen's inequality, 
\begin{align*}
&-\int_{\Re^m}f(x)\log f(x)\,dx + \int_{\Re^m}f(x)\log f^*(x)dx\\ 
    & = \int_{\Re^m}f(x)\log \left(\frac{f^*(x)}{f(x)}\right)\,dx \\
	& \leq \log\left(\int_{\Re^m}f(x) \left(\frac{f^*(x)}{f(x)}\right)\,dx\right) \\
	& = \log \left(\int_{\Re^{m}} f^*(x)dx\right) 
	  = 0.
\end{align*}

If $Z\sim\GG_\tau(m,s)$ with $\tau =\displaystyle\frac{m}{s\expe\|X\|^s}$ (which ensures that $\expe\|Z\|^s=\expe\|X\|^s$) we have
\[
f^*(x) = c(m,s)\exp(-\tau\|x\|^s), 
\]
where
\[
c(m,s) = \frac{\Gamma(m/2+1)\tau^{m/s}}{\Gamma(m/s+1)\pi^{m/2}}.
\]

For this case, $-\log f^*(x) = \tau \|x\|^s - \log c(m,s)$ and hence
\begin{align*}
&-\int_{\Re^m}f(x)\log f^*(x)dz 
	\\
	& = \tau\int_{\Re^m}\|x\|^sf(x)\,dx - (\log c(m,s)) \int_{\Re^m}f(x)\,dx  \\[1ex]
	& = \tau\,\expe\|X\|^s - \log c(m,s) \\
	& = \frac{m}{s} - \log c(m,s) \quad\text{by Lemma~\ref{lem:moments-GG}.} 
\end{align*}

Thus by~\eqref{eq:shannon-upperbound} and substituting for $c(m,s)$ we obtain 
\begin{align*}
H(f) 
	& \leq \frac{m}{s} - \log\left[\frac{\tau^{m/s}\Gamma(m/2+1)}{\pi^{m/2}\Gamma(m/s+1)}\right]\\
	&= \frac{m}{s}\log \left[\left(\frac{\Gamma(m/s+1)}{\Gamma(m/2+1)}\right)^{\frac{s}{m}}\left(\frac{e\pi^{\frac{s}{2}}}{\tau}\right)\right],
\end{align*}
and substituting for $\expe\|X\|^s = m/(s\tau)$ completes the proof.
\end{proof}

\begin{remark}
Theorem $\ref{thm:maxentGG}$ was proved for $m=1$ in \cite{wyner1969} and \cite[p.103-104]{rosenblatt2000}. For $m\geq 1$, some statements of Theorem~\ref{thm:maxentGG} were also proved using other methods in \cite{lutwak2007}.
\end{remark}

\section{A test statistic for $GG(m,s)$}\label{sec:teststatistic}
Let $k\geq 1$ be fixed and $\mathcal{K}$ be the class of density functions $f$ on $\Re^m$ such that 
\begin{enumerate}
\item $\supp(f)=\Re^m$, 
\item $\expe(\|X\|^s)<\infty$ for some $s>0$, 
\item $\expe(\widehat{H}_{N,k}) \to H(f)$ as $N\to\infty$, and 
\item $\widehat{H}_{N,k} \to H(f)$ in probability as $N\to\infty$.
\end{enumerate}

\begin{proposition}
The $\GG_\tau(m,s)$ density functions belong to $\mathcal{K}$ for all $m\geq 1$, $s>0$, $c>0$ and $k\geq 1$.
\end{proposition}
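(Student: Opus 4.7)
The plan is to verify each of the four defining properties of the class $\mathcal{K}$ in turn for $f = f_c(\,\cdot\,;m,s)$, the density in \eqref{eq:GGpdf}; three of the four are essentially immediate and the only real work is invoking Theorem~\ref{thm:main}.

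First I would check condition 1: since $f_c(x;m,s) = c(m,s)\exp(-\tau\|x\|^s) > 0$ for every $x\in\Re^m$, the support is all of $\Re^m$. For condition 2, Lemma~\ref{lem:moments-GG} gives $\expe\|X\|^s = m/(s\tau) < \infty$, so condition 2 holds with the exponent equal to the shape parameter $s$ itself.

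For conditions 3 and 4 the strategy is to apply Theorem~\ref{thm:main}. The two hypotheses of that theorem are both straightforward: the exponential factor in \eqref{eq:GGpdf} is bounded by $1$, so $f_c(x;m,s) \leq c(m,s) =: M$ uniformly in $x$; and by Lemma~\ref{lem:moments-GG} the condition $\expe\|X\|^\alpha < \infty$ holds with $\alpha = s > 0$. Theorem~\ref{thm:main} then yields $L^2$-convergence of $\widehat{H}_{N,k}$ to $H(f)$. This immediately gives convergence in probability (condition 4); and since $L^2$-convergence implies $L^1$-convergence by Cauchy--Schwarz, it also gives $\expe\widehat{H}_{N,k} \to H(f)$ (condition 3). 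To confirm that these statements are meaningful, note that $H(f)$ is finite and in fact equals $-\log c(m,s) + \tau\,\expe\|X\|^s = -\log c(m,s) + m/s$ by direct computation from \eqref{eq:GGpdf} and Lemma~\ref{lem:moments-GG}.

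There is no real obstacle here: the proposition is a verification of hypotheses rather than a new analytic argument, with the substantive work already carried out in Theorem~\ref{thm:main} and Lemma~\ref{lem:moments-GG}. The only point requiring a moment's care is the uniform bound on the density, which follows just from the fact that the exponential factor never exceeds $1$.
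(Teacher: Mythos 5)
Your proposal is correct and follows essentially the same route as the paper: the paper's proof is a one-line citation of Theorem~\ref{thm:main} (applicable since $f$ is bounded) together with Lemma~\ref{lem:moments-GG}, and you simply spell out the same verification in full, including the standard implications from $L^2$-convergence to convergence in probability and convergence of the means. No gaps.
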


\begin{proof}
The statement follows from Theorem~\ref{thm:main}, which applies because $f$ is bounded, and Lemma \ref{lem:moments-GG}.
\end{proof}

Let $X\in\Re^m$ be a random vector with density $f\in\mathcal{K}$, and let $s>0$ be fixed. Based on a random sample $X_1,X_2,\dots$ from the distribution of $X$, we use the maximum entropy principle proved in Section~\ref{sec:maxent} to test the hypothesis $X\sim\GG(m,s)$ against a suitable alternative.
By Theorem~\ref{thm:maxentGG}, if $X\sim\GG(m,s)$ then
\[
H(X) = \frac{m}{s}\log\expe\|X\|^s + \frac{m}{s} \log c_1(m,s),\]
where
\[
c_1(m,s)=\left(\frac{\pi^{m/2}\Gamma(m/s+1)}{\Gamma(m/2+1)}\right)^{s/m}\left(\frac{se}{m}\right).
\]
We estimate the entropy $H(X)$ by the $k$th nearest neighbour estimator 
\[
\widehat{H}_{N,k} 
	= \frac{m}{N}\sum_{i=1}^{N}\log\rho_k(X_i,\mathcal{X}_N) + \log V_m + \log(N-1) -\psi(k),
\]
and the moment $\expe\|X\|^s$ by the sample moment
\[
\bar{X}^{(s)}_{N} = \frac{1}{N}\sum_{i=1}^{N}\|X_i\|^s.
\]
Our test statistic $T_{N,k}=T_{N,k}(m,s)$ is then
\begin{align*}
 T_{N,k} 
	& = \widehat{H}_{N,k} - \frac{m}{s}\log\bar{X}^{(s)}_{N} - \frac{m}{s} \log c_1(m,s).
\end{align*}

By the law of a large numbers, $\bar{X}^{(s)}_N\to\expe\|X\|^s$ in probability as $N\to\infty$. Hence by Slutsky's theorem, if $X\sim\GG_m(s)$ then for any fixed $k\in\{1,\ldots,N-1\}$ we have
\[
T_{N,k} \to 0 \quad\text{in probability as $N\to\infty$.}
\]
Otherwise, by the maximum entropy principle it must be that $T_{N,k}\to \xi$ in probability as $N\to\infty$, where the constant $\xi = \xi(m,s,k)$ is strictly negative.
Thus we reject the hypothesis $X\sim\GG(m,s)$ whenever $T_{N,k}\geq t_{N,k,\alpha}$, where $t_{N,k,\alpha}=t_{N,k,\alpha}(m,s)$ a so-called \emph{critical value} of the test statistic $T_{N,k}(m,s)$ at significance level $\alpha$, which is a solution of
\[
\prob_{H_0}\big(T_{N,k}\geq t\big) = \alpha. 
\]
An analytical derivation of the distribution of $T_{N,k}$ when $X\sim\GG(m,s)$ is difficult because the covariances of $T_{N,k}$ and $\bar{X}^{(s)}_N$ are intractable, even though the \emph{asymptotic} behaviour of $\widehat{H}_{N,k}$ can be revealed by applying results of \cite{penrose2011, delattre2017} or \cite{berrett2019a}, and the asymptotic behaviour of $\bar{X}^{(s)}_N$ by the delta method. Thus we use Monte Carlo simulation to investigate the distribution of $T_{N,k}=T_{N,k}(m,s)$ for different combinations of parameter values.

\begin{remark}
The test statistic $T_{N,k}$ is scale-invariant: if $Y = aX$ for some $a>0$, then
$\widehat{H}_{N,k}(Y) = \log(a^m) + \widehat{H}_{N,k}(X)$ and $\bar{Y}^{(s)}_{N} = a^{s}\bar{X}^{(s)}_{N}$, and hence
\begin{align*}
T_{N,k}(Y) 
	 &= \widehat{H}_{N,k}(Y) - \frac{m}{s}\log\bar{Y}^{(s)}_{N} -  \frac{m}{s} \log c_1(m,s) \\
	& = \log(a^m) + \widehat{H}_{N,k}(X) - \frac{m}{s}\log(a^s) \\
	&- \frac{m}{s}\log\bar{X}^{(s)}_{N} - \frac{m}{s} \log c_1(m,s) \\
	 &= \widehat{H}_{N,k}(X) - \frac{m}{s}\log\bar{X}^{(s)}_{N} - \frac{m}{s} \log c_1(m,s)\\
	 &= T_{N,k}(X).
\end{align*}
\end{remark}

\section{Numerical results}\label{sec:numerical}

To investigate the behaviour of $T_{N,k}(m,s)$ we generate random samples from the $\GG(m,s)$ distribution, and also from the isotropic multivariate Student $t$-distribution $\ST(m,\nu),$ $\nu>0$ on $\Re^m$ which has density function
\[
f(x;m,\nu) 
	= \frac{\Gamma\big[(\nu + m)/2\big]}{\Gamma(\nu/2)(\nu\pi)^{m/2}}
	\left(1+\frac{\|x\|^2}{\nu}\right)^{-(\nu+m)/2}, x\in \Re^m.
\]
This is achieved via the following stochastic representations \cite{solaro2004}.

\begin{lemma}
\begin{enumerate}
\item 
For $X\sim\GG(m,s)$ we have $X\stackrel{d}{=}UR$ where $U$ is uniformly distributed on $\mathbb{S}^{m-1}$ and $R\stackrel{d}{=}V^{1/s}$ with $V\sim\text{Gamma}(m/s,2)$.
\item
For $X\sim\ST(m,\nu)$ we have $X\stackrel{d}{=}Z/\sqrt{G}$, where $Z\sim N(0,I_m)$ and $G\sim\text{Gamma}(\nu/2, 2\nu)$.
\end{enumerate}
\end{lemma}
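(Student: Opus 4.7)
Both parts are density identifications, and I would carry them out in parallel using two standard facts. For part (1), the key identity is that if $U$ is uniform on $\mathbb{S}^{m-1}$ and independent of a nonnegative scalar $R$ with density $f_R$, then $X=UR$ is isotropic on $\Re^m$ with Lebesgue density $f_X(x)=f_R(\|x\|)/(\|x\|^{m-1}\omega_m)$, where $\omega_m=2\pi^{m/2}/\Gamma(m/2)$; this follows by writing $\expe[g(X)]=\expe[g(UR)]$ and separating variables in spherical coordinates. For part (2), the tool is the scale-mixture decomposition: conditionally on $G=g$, $Z/\sqrt{G}\sim N(0,g^{-1}I_m)$, so
\[
f_X(x)=\int_0^\infty \frac{g^{m/2}}{(2\pi)^{m/2}}\,e^{-g\|x\|^2/2}\,f_G(g)\,dg.
\]

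For part (1), I would push the shape-scale Gamma$(m/s,2)$ density of $V$ through the monotone change of variables $r=v^{1/s}$ to read off $f_R$, substitute into the isotropic density formula, and simplify the constant using $\Gamma(m/s+1)=(m/s)\Gamma(m/s)$ and $\Gamma(m/2+1)=(m/2)\Gamma(m/2)$. The factor $\|x\|^{m-1}$ cancels and the remaining prefactor matches exactly the $\IEP_m(s)$ constant, so $X$ has the density \eqref{eq:sep}, equivalently \eqref{eq:GGpdf} with $\tau=1/2$. For part (2), I would plug the Gamma density of $G$ into the mixture integral, collect the $g$-dependence in the exponent, and evaluate by $\int_0^\infty g^{a-1}e^{-bg}\,dg=\Gamma(a)/b^{a}$ with $a=(m+\nu)/2$. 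The result has Student-$t$ shape $(1+c\|x\|^2)^{-(m+\nu)/2}$; matching the exponent to the target $(1+\|x\|^2/\nu)^{-(m+\nu)/2}$ fixes the scale of $G$, and with that same scale the prefactor collapses to the $\ST(m,\nu)$ normalizing constant.

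Neither step presents a real analytic obstacle; the only thing that needs care is the Gamma parameterization. In particular, the second argument of Gamma in part (2) must be interpreted as the convention under which $G\stackrel{d}{=}W/\nu$ with $W\sim\chi^2_\nu$, which is the classical Student-$t$ construction; otherwise the $(\nu\pi)^{m/2}$ factor in the denominator of the $\ST(m,\nu)$ density will not appear. I would fix this convention explicitly at the start of the proof, so that the final exponent-matching and prefactor-matching at the end of the computation are unambiguous.
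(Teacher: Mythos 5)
Your proof is correct in outline, but there is nothing in the paper to compare it against: the authors state this lemma with a bare citation to Solaro (2004) and give no proof at all. Your two-step verification --- the polar-decomposition density formula $f_X(x)=f_R(\|x\|)/(\omega_m\|x\|^{m-1})$ for part (1) and the Gaussian scale-mixture integral for part (2) --- is the standard direct route and both computations go through exactly as you describe. In part (1), pushing $V\sim\text{Gamma}(m/s,2)$ (shape--scale) through $r=v^{1/s}$ gives $f_R(r)\propto r^{m-1}e^{-r^s/2}$, and after the $r^{m-1}$ cancellation the constant $\frac{s\,\Gamma(m/2)}{2\,\Gamma(m/s)}=\frac{\Gamma(m/2+1)}{\Gamma(m/s+1)}$ reproduces the $\IEP_m(s)$ density, i.e.\ $\GG_\tau(m,s)$ with $\tau=1/2$; this is consistent with the variance formula $\sigma^2=2^{2/s}\Gamma[(m+2)/s]/(m\Gamma(m/s))$ the paper uses in the same section, so the shape--scale reading of the Gamma is the right one. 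Your flag on part (2) is a genuine catch and worth making explicit: with that same shape--scale convention, $\text{Gamma}(\nu/2,2\nu)$ yields a rate of $1/(2\nu)$ and hence the density $\propto(1+\nu\|x\|^2)^{-(m+\nu)/2}$, which is the Student $t$ mis-scaled by a factor $\nu$; the representation $X\stackrel{d}{=}Z/\sqrt{G}$ matches $\ST(m,\nu)$ only if $G$ has rate $\nu/2$, i.e.\ $G\sim\text{Gamma}(\nu/2,2/\nu)$ in shape--scale, equivalently $G\stackrel{d}{=}W/\nu$ with $W\sim\chi^2_\nu$ as you say. So the ``$2\nu$'' in the statement is almost certainly a typo for ``$2/\nu$'', and fixing the convention at the outset, as you propose, is exactly what a written-out proof of this lemma should do.
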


For the case $m=2,$ we put the generated points on scatter plots for different values of $s$ and $\nu,$ see Figure \ref{fig:scatter-GG} for $\GG(m,s)$ and Figure \ref{fig:scatter-ST} for $\ST(m,\nu).$ One can observe that visually distributions $\GG(m,s)$ and $\ST(m,\nu)$ are hardly distinguishable. Therefore, we apply our goodness-of-fit test for detecting of the generalized Gaussian distribution.

\begin{figure*}[!t]
\centering
\includegraphics[width=\textwidth]{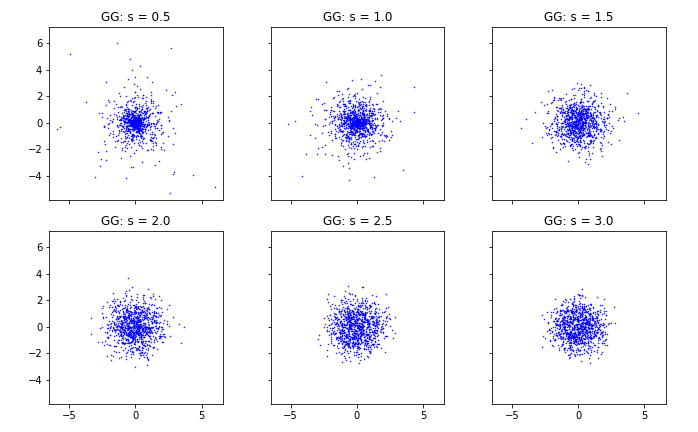}
\caption{Scatter plots for $\GG(m,s)$ with $m=2$}\label{fig:scatter-GG}
\end{figure*}

\begin{figure*}[!t]
\centering
\includegraphics[width=\textwidth]{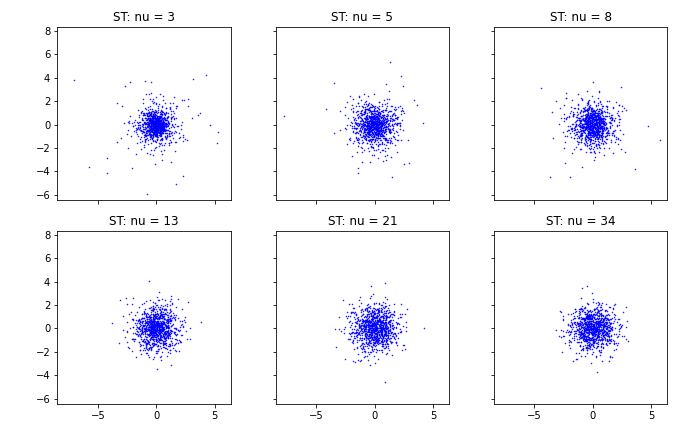}
\caption{Scatter plots for $\ST(m,\nu)$ with $m=2$}\label{fig:scatter-ST}
\end{figure*}

\subsection{Empirical distribution of $\GG(m,s)$}
We generate $N=10^6$ points from the $\GG(m,s)$ distribution for different values of $s$. For the purpose of comparison we apply the scaling $X\mapsto X/\sigma$ where
\[
\sigma^2 = \frac{2^{2/s}\Gamma\big[(m+2)/s\big]}{m\Gamma(m/s)}
\]
is the variance of the $\GG(m,s)$ distribution. The results are shown in Figure~\ref{fig:empirical}.

\begin{figure*}[!t]
\centering
\subfloat[Empirical density function]{\includegraphics[width=0.49\textwidth]{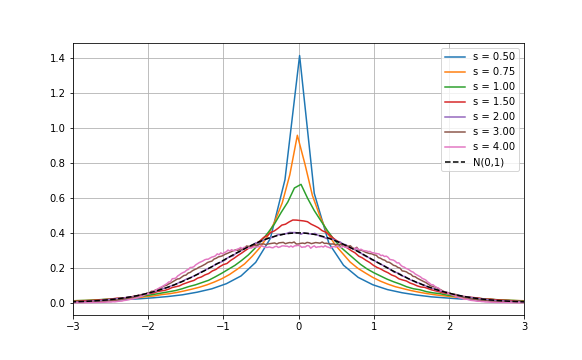}\label{fig:epdfs}}
\hfill
\subfloat[Empirical log--density function]{\includegraphics[width=0.49\textwidth]{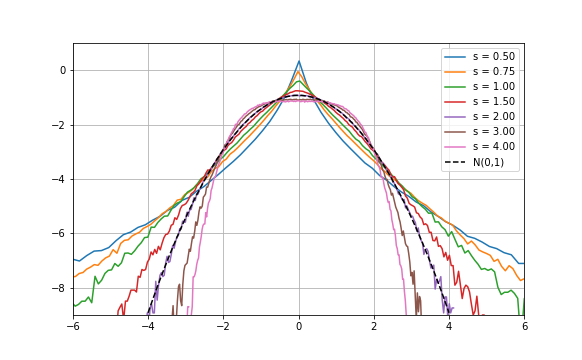}\label{fig:elogpdfs}}
\caption{Empirical distribution of $\GG(m,s)$ for $m=1$ and different values of $s$.}
\label{fig:empirical}
\end{figure*}

\subsection{Asymptotic behaviour of $T_{N,k}(m,s)$ as $N\to\infty$.}

For fixed $(N,k)$ and $(m,s)$ we generate a sample of size $N$ from the $\GG(m,s)$ distribution and record the empirical value of $T_{N,k}(m,s)$, repeating this $M=10$ times. This yields a sample realisation $\{T_1,T_2,\ldots,T_M\}$ from the distribution of $T_{N,k}(m,s)$, from which we estimate its mean and variance by
\[
\bar{T}_{N,k} = \frac{1}{M}\sum_{j=1}^M T_i
\quad\text{and}\quad
S^2_{N,k} = \frac{1}{M-1}\sum_{j=1}^M (T_i - \bar{T}_{N,k})^2
\]

In Figures~\ref{fig:consistency-kall} and~\ref{fig:consistency-sall} we show how $\bar{T}_{N,k}(m,s)$ approaches 0 as $N$ increases for various values of $m$, $s$ and $k$. In Figure~\ref{fig:consistency-k=1} we show its behaviour for $k=1$ with error bars corresponding to the standard error $S_{N,k}/\sqrt{M}$.

\subsubsection{Asymptotic behaviour of $T_{N,k}(m,s_0)$ on data from $GG(m,s_1)$}

For various values of $s_0$ and $s_1$ we generate samples from the $\GG(m,s_1)$ distribution and examine the behaviour of $T_{N,k}(m,s_0)$ as $N$ increases. The results are shown in Figure~\ref{fig:convergence-GG} and Figure~\ref{fig:convergence-GGall}. When $s_0\neq s_1$ we see that the statistic approaches a strictly negative value, and that this becomes increasingly negative as the difference between $s_0$ and $s_1$ increases.

\subsubsection{Asymptotic behaviour of $T_{N,k}(m,s)$ on data from $\ST(m,\nu)$}

For various values of $s$ we generate samples from the $\ST(m,\nu)$ distribution and examine the behaviour of $T_{N,k}(m,s)$ as $N$ increases. The results are shown in Figure~\ref{fig:convergence-STall}.

\subsection{Empirical distribution of $T_{N,k}(m,s)$}

Numerical results suggest that the distribution of $T_{N,k}(m,s)$ is asymptotically normal as the sample size $N\to\infty$. For different values of $(N,k)$ and $(m,s)$ we generate $N_T=1000$ samples from the $\GG(m,s)$ distribution and record the corresponding values of $T_{N,k}(m,s)$, repeating this $M=10$ times. To each of these $10$ samples from the distribution of $T_{N,k}(m,s)$ we then apply the Shapiro-Wilk test for normality \cite{shapiro1965} and record the $p$-value returned by the test.

Figure~\ref{fig:sw-pvals-all} shows how these $p$-values behave as $N$ increases, for various values of $m$, $s$ and $k$. The plots suggest that the normal hypothesis cannot be rejected for samples of size $N=200$ or more. 

In Figure~\ref{fig:sw-pvals-k=1} we show how the $p$-values behave for $k=1$, with error bars corresponding to the standard error across the $M=10$ repetitions.

\section*{Acknowledgment}
Nikolai Leonenko would like to thank Prof. Richard  Samworth and Prof. Mathew Penrose for a fruitful discussion on a problem of entropy estimation on the Workshop 'Estimation of entropies and other functionals: Statistics meets information theory' on 9-11 September 2019,	Cambridge (UK).

\bibliographystyle{biometrika}
\bibliography{entropy}

\begin{thebibliography}{29}
\expandafter\ifx\csname natexlab\endcsname\relax\def\natexlab#1{#1}\fi

\bibitem[{Barron(1986)}]{barron1986}
\textsc{Barron, A.~R.} (1986).
\newblock Entropy and the central limit theorem.
\newblock \textit{Ann. Prob.} \textbf{14}, 336--342.

\bibitem[{Beirlant et~al.(1997)Beirlant, Dudewicz, Gy{\"o}rfi \& Van~der
  Meulen}]{beirlant1997}
\textsc{Beirlant, J.}, \textsc{Dudewicz, E.~J.}, \textsc{Gy{\"o}rfi, L.} \&
  \textsc{Van~der Meulen, E.~C.} (1997).
\newblock Nonparametric entropy estimation: An overview.
\newblock \textit{International Journal of Mathematical and Statistical
  Sciences} \textbf{6}, 17--39.

\bibitem[{Berrett \& Samworth(2019)}]{berrett2019b}
\textsc{Berrett, T.~B.} \& \textsc{Samworth, R.~J.} (2019).
\newblock Nonparametric independence testing via mutual information.
\newblock \textit{Biometrika} \textbf{106}, 547--566.

\bibitem[{Berrett et~al.(2019)Berrett, Samworth \& Yuan}]{berrett2019a}
\textsc{Berrett, T.~B.}, \textsc{Samworth, R.~J.} \& \textsc{Yuan, M.} (2019).
\newblock Efficient multivariate entropy estimation via $ k $-nearest neighbour
  distances.
\newblock \textit{The Annals of Statistics} \textbf{47}, 288--318.

\bibitem[{Bobkov \& Madiman(2011)}]{bobkov2011}
\textsc{Bobkov, S.} \& \textsc{Madiman, M.} (2011).
\newblock The entropy per coordinate of a random vector is highly constrained
  under convexity conditions.
\newblock \textit{IEEE Transactions on Information Theory} \textbf{57},
  4940--4954.

\bibitem[{Bulinski \& Dimitrov(2019{\natexlab{a}})}]{bulinski2019KL}
\textsc{Bulinski, A.} \& \textsc{Dimitrov, D.} (2019{\natexlab{a}}).
\newblock Statistical estimation of the {K}ullback-{L}eibler divergence.
\newblock \textit{arXiv preprint arXiv:1907.00196} .

\bibitem[{Bulinski \& Dimitrov(2019{\natexlab{b}})}]{bulinski2019shannon}
\textsc{Bulinski, A.} \& \textsc{Dimitrov, D.} (2019{\natexlab{b}}).
\newblock Statistical estimation of the {S}hannon entropy.
\newblock \textit{Acta Mathematica Sinica, English Series} \textbf{35}, 17--46.

\bibitem[{Choi(2008)}]{choi2008}
\textsc{Choi, B.} (2008).
\newblock Improvement of goodness-of-fit test for normal distribution based on
  entropy and power comparison.
\newblock \textit{Journal of Statistical Computation and Simulation}
  \textbf{78}, 781--788.

\bibitem[{Delattre \& Fournier(2017)}]{delattre2017}
\textsc{Delattre, S.} \& \textsc{Fournier, N.} (2017).
\newblock On the {K}ozachenko--{L}eonenko entropy estimator.
\newblock \textit{Journal of Statistical Planning and Inference} \textbf{185},
  69--93.

\bibitem[{Dudewicz \& Van Der~Meulen(1981)}]{dudewicz1981}
\textsc{Dudewicz, E.~J.} \& \textsc{Van Der~Meulen, E.~C.} (1981).
\newblock Entropy-based tests of uniformity.
\newblock \textit{Journal of the American Statistical Association} \textbf{76},
  967--974.

\bibitem[{Evans(2008)}]{evans2008}
\textsc{Evans, D.} (2008).
\newblock A computationally efficient estimator for mutual information.
\newblock \textit{Proceedings of the Royal Society A: Mathematical, Physical
  and Engineering Sciences} \textbf{464}, 1203--1215.

\bibitem[{Evans et~al.(2002)Evans, Jones \& Schmidt}]{evans2002}
\textsc{Evans, D.}, \textsc{Jones, A.~J.} \& \textsc{Schmidt, W.~M.} (2002).
\newblock Asymptotic moments of near--neighbour distance distributions.
\newblock \textit{Proceedings of the Royal Society of London. Series A:
  Mathematical, Physical and Engineering Sciences} \textbf{458}, 2839--2849.

\bibitem[{Gao et~al.(2018)Gao, Oh \& Viswanath}]{gao2018}
\textsc{Gao, W.}, \textsc{Oh, S.} \& \textsc{Viswanath, P.} (2018).
\newblock Demystifying fixed $ k $-nearest neighbor information estimators.
\newblock \textit{IEEE Transactions on Information Theory} \textbf{64},
  5629--5661.

\bibitem[{Gnedenko \& Kolmogorov(1954)}]{gnedenko1954}
\textsc{Gnedenko, B.~V.} \& \textsc{Kolmogorov, A.~N.} (1954).
\newblock \textit{Limit distributions for sums of independent random
  variables}.
\newblock Addison-Wesley Publishing Company, Inc., Cambridge, Mass.

\bibitem[{Goria et~al.(2005)Goria, Leonenko, Mergel \&
  Novi~Inverardi}]{goria2005}
\textsc{Goria, M.~N.}, \textsc{Leonenko, N.~N.}, \textsc{Mergel, V.~V.} \&
  \textsc{Novi~Inverardi, P.~L.} (2005).
\newblock A new class of random vector entropy estimators and its applications
  in testing statistical hypotheses.
\newblock \textit{Journal of Nonparametric Statistics} \textbf{17}, 277--297.

\bibitem[{Kapur(1989)}]{kapur1989}
\textsc{Kapur, J.~N.} (1989).
\newblock \textit{Maximum-Entropy Models in Science and Engineering}.
\newblock John Wiley \& Sons.

\bibitem[{Kozachenko \& Leonenko(1987)}]{kozachenko1987}
\textsc{Kozachenko, L.~F.} \& \textsc{Leonenko, N.~N.} (1987).
\newblock Sample estimate of the entropy of a random vector.
\newblock \textit{Problems of Information Transmission} \textbf{23}, 95--101.

\bibitem[{Leonenko \& Pronzato(2010)}]{leonenko2010}
\textsc{Leonenko, N.~N.} \& \textsc{Pronzato, L.} (2010).
\newblock Correction: A class of {R}{\'e}nyi information estimators for
  multidimensional densities.
\newblock \textit{Annals of Statistics} \textbf{38}, 3837--3838.

\bibitem[{Leonenko et~al.(2008)Leonenko, Pronzato \& Savani}]{leonenko2008}
\textsc{Leonenko, N.~N.}, \textsc{Pronzato, L.} \& \textsc{Savani, V.} (2008).
\newblock A class of {R}{\'e}nyi information estimators for multidimensional
  densities.
\newblock \textit{The Annals of Statistics} \textbf{36}, 2153--2182.

\bibitem[{Lutwak et~al.(2007)Lutwak, Yang \& Zhang}]{lutwak2007}
\textsc{Lutwak, E.}, \textsc{Yang, D.} \& \textsc{Zhang, G.} (2007).
\newblock Moment-entropy inequalities for a random vector.
\newblock \textit{IEEE Transactions on Information Theory} \textbf{53},
  1603--1607.

\bibitem[{Marsiglietti \& Kostina(2018)}]{marsiglietti2018}
\textsc{Marsiglietti, A.} \& \textsc{Kostina, V.} (2018).
\newblock A lower bound on the differential entropy of log-concave random
  vectors with applications.
\newblock \textit{Entropy} \textbf{20}, 185.

\bibitem[{Penrose \& Yukich(2011)}]{penrose2011}
\textsc{Penrose, M.~D.} \& \textsc{Yukich, J.~E.} (2011).
\newblock Laws of large numbers and nearest neighbor distances.
\newblock In \textit{Advances in Directional and Linear Statistics}. Springer,
  pp. 189--199.

\bibitem[{Penrose \& Yukich(2013)}]{penrose2013}
\textsc{Penrose, M.~D.} \& \textsc{Yukich, J.~E.} (2013).
\newblock Limit theory for point processes in manifolds.
\newblock \textit{The Annals of Applied Probability} \textbf{23}, 2161--2211.

\bibitem[{Rosenblatt(2000)}]{rosenblatt2000}
\textsc{Rosenblatt, M.} (2000).
\newblock \textit{Gaussian and Non-Gaussian Linear Time Series and Random
  Fields}.
\newblock Springer, New York.

\bibitem[{Shapiro \& Wilk(1965)}]{shapiro1965}
\textsc{Shapiro, S.~S.} \& \textsc{Wilk, M.~B.} (1965).
\newblock An analysis of variance test for normality (complete samples).
\newblock \textit{Biometrika} \textbf{52}, 591–611.

\bibitem[{Solaro(2004)}]{solaro2004}
\textsc{Solaro, N.} (2004).
\newblock Random variate generation from multivariate exponential power
  distribution.
\newblock \textit{Statistica \& Applicazioni} \textbf{2}, 25--44.

\bibitem[{Tsybakov \& Van~der Meulen(1996)}]{tsybakov}
\textsc{Tsybakov, A.~B.} \& \textsc{Van~der Meulen, E.} (1996).
\newblock Root-n consistent estimators of entropy for densities with unbounded
  support.
\newblock \textit{Scandinavian Journal of Statistics} , 75--83.

\bibitem[{Vasicek(1976)}]{vasicek1976}
\textsc{Vasicek, O.} (1976).
\newblock A test for normality based on sample entropy.
\newblock \textit{Journal of the Royal Statistical Society: Series B
  (Methodological)} \textbf{38}, 54--59.

\bibitem[{Wyner \& Ziv(1969)}]{wyner1969}
\textsc{Wyner, A.~D.} \& \textsc{Ziv, J.} (1969).
\newblock On communication of analog data from a bounded source space.
\newblock \textit{Bell System Technical Journal} \textbf{48}, 3139--3172.

\end{thebibliography}

\appendix
\section{Lower bound on Shannon entropy}\label{appx:lowerbound}
Below we present some essentials about lower bounds of Shannon entropy. First, we show that there exist densities such that $-\infty = H(f)<\infty.$  We modify an example of \cite[p.223]{gnedenko1954}. For other examples, see \cite{barron1986}.
\begin{example}
Let $m=1$ and consider the density
\begin{equation}\label{eq_1_dim}
 f(x) = \left[ x\log^2 \frac{e}{x} \right]^{-1}\mathbbm{1}_{[0,1]}(x), \qquad x \in \Re.
\end{equation}
If $X$ is random variable with density $~\eqref{eq_1_dim}$, then for $s=1$
\begin{equation}\label{eq_1_E(x)}
\expe X = \expe|X| =   \int_{0}^{1}\left[\log^2\frac{e}{x}\right]^{-1}\text dx = 1- \text E_1(1) \simeq 0.40365...,
\end{equation}
where
\begin{equation*}
\text E_p(z)  = z^{p-1}\Gamma(1-p,z) = z^{p-1} \int_{z}^{\infty} \frac{e^{-zt}}{t^p} \text dt, \quad p>0,  \quad z\geq0,
\end{equation*}
is the generalized exponential integral. Thus by Theorem~\ref{thm:maxentGG}
with $m=1$ and $s=1$,
\[
H(f) \leq \log\left[2e\expe|X| \right] \simeq  0.8073.
\]
From the other hand,
\[
H(f)=-\int_{0}^{1}\left[x\log^2\frac{e}{x}\right]^{-1}\log \left[x\log^2\frac{e}{x}\right]^{-1}\text dx = -\infty.
\]
%
\end{example}
\begin{example}
For $m\geq 2,$ the similar properties has the density
\begin{equation*}
f(x) =c_2(m)\left[\|x\|^m \log^2\frac{e}{\|x\|}\right]^{-1}\mathbbm{1}_{B_1(0)}(x), \quad x \in \Re^{m},
\end{equation*}
where
$c_2(m) = \Gamma(\frac{m}{2})/(2\pi^{m/2}).$ 
That is $f$ has finite moments but $H(f)=-\infty.$
\end{example}

If a random vector $X$ in $\Re^{m}$ has a bounded density $f$ with $\|f\|_\infty=\sup_{x\in \Re^m}f(x)<\infty$, then there is a lower bound for its entropy \cite{bobkov2011})
\begin{equation}
\label{eq:log_f}
    \frac{1}{m}H(f)\geq \log\|f\|_\infty^{-1/m}.
\end{equation}

If, in addition, $f$ is log-concave (that is, $\log f$ is concave),  then
\[
  \log\|f\|_\infty^{-1/m} \leq  \frac{1}{m}H(f) \leq 1 + \log \|f\|_\infty^{-1/m}.
\]
Moreover, provided the existence of $p$th moment $\|X\|_p = \left\{\expe\|X\|^p\right\}^{1/p}<\infty$, $p\geq 1,$ one has for a log-concave density $f,$ see \cite{marsiglietti2018},
\begin{equation}\label{eq_H(f)}
   H(f)\geq \log \frac{2\|X - \expe[X]\|_p}{\left[\Gamma(1+p)\right]^{1/p}}.
\end{equation}
If $m=1$, then for symmetric log-concave random variable
\begin{equation}\label{eq_|x|_p}
    H(f)\geq \log \frac{2\|X\|_p}{\left[\Gamma(p+1)\right]^{1/p}}, \quad p>-1.
\end{equation}{}
If a symmetric log-concave random vector on $\Re^m$ has finite second moments, then 
\begin{equation}\label{eq_m/2}
   H(f)\geq \frac{m}{2} \log \frac{(\det \Sigma_x)^{1/m}}{c_5(m)}, 
\end{equation}
where $\Sigma_x=\expe\left[(X-\expe X)(X-\expe X)^T\right]$ denotes the  the covariance matrix of $X$ and
\begin{equation}\label{eq_Sigma}
 c_3(m) = \frac{e^2m^2}{4\sqrt{2}(m+2)}.
\end{equation}

Constant $c_3(m)$ can be improved in the case of unconditional random vectors. A function $f:\Re^m\to\Re^m$ is called \emph{unconditional} if for every $(x_1,\ldots,x_m)\in\Re^m$ and $(\varepsilon_1,\ldots,\varepsilon_m)\in \{-1,1\}^{m}$, one has 
\[
f(\varepsilon_1x_1,\ldots,\varepsilon_mx_m ) = f(x_1,\dots,x_m).
\]
For example, the density of standard isotropic Gaussian vector is unconditional. Thus, if 
 $X$ is unconditional, symmetric, and log-concave, then 
\begin{equation}\label{eq_c_5(m)}
    c_3(m) = e^2/2.
\end{equation}{}
The constant ~\eqref{eq_c_5(m)} is better than the constant ~\eqref{eq_Sigma} for $m\geq5$.

\section{Monte-Carlo simulations}
In this section, we illustrate the results of Monte-Carlo simulations by the series of Figures \ref{fig:consistency-kall}--\ref{fig:sw-pvals-k=1}.
\begin{figure*}[!t]
\centering\includegraphics[width=\linewidth]{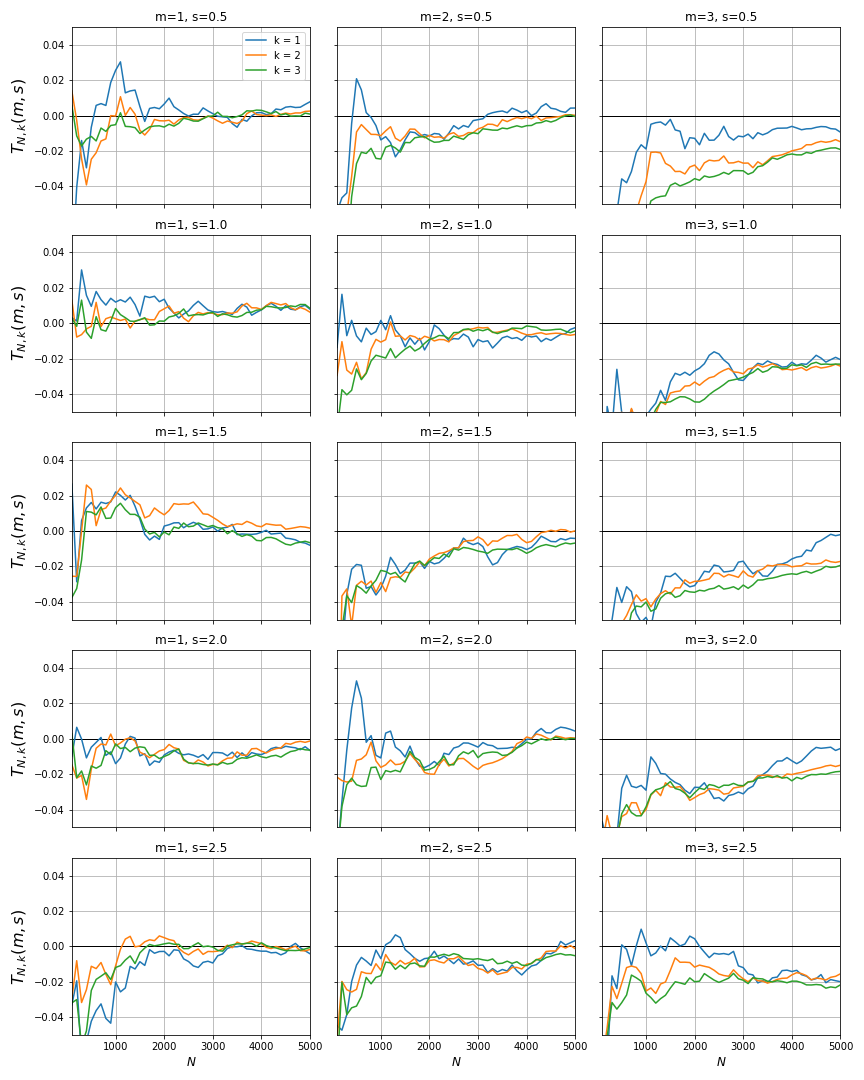}
\caption{Consistency of $T_{N,k}(m,s)$ for different values of $k$ ($M=10$ repetitions).}
\label{fig:consistency-kall}
\end{figure*}
\begin{figure*}[!t]
\centering\includegraphics[width=\linewidth]{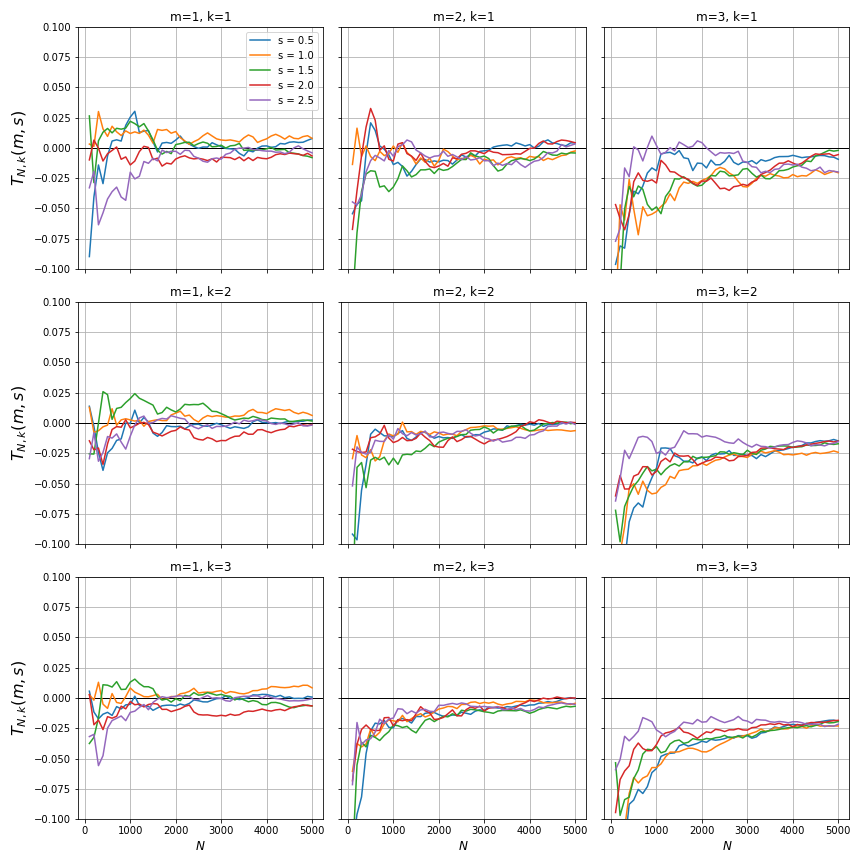}
\caption{Consistency of $T_{N,k}(m,s)$ for different values of $s$ ($M=10$ repetitions).}
\label{fig:consistency-sall}
\end{figure*}
\begin{figure*}[!t]
\centering\includegraphics[height=0.95\textheight,keepaspectratio]{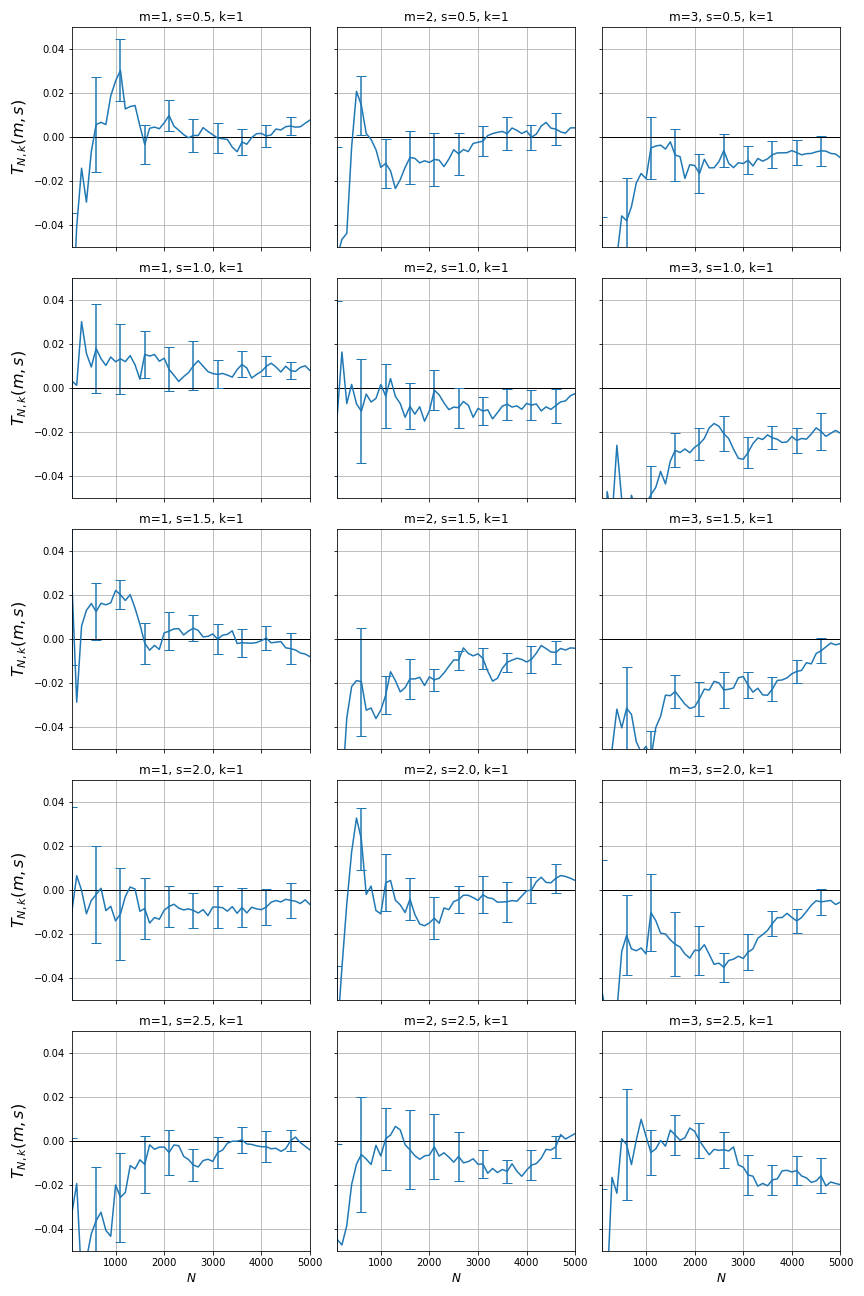}
\caption{Consistency of $T_{N,k}(m,s)$ for $k=1$ ($M=10$ repetitions).}
\label{fig:consistency-k=1}
\end{figure*}
\begin{figure*}[!t]
\centering\includegraphics[width=\linewidth]{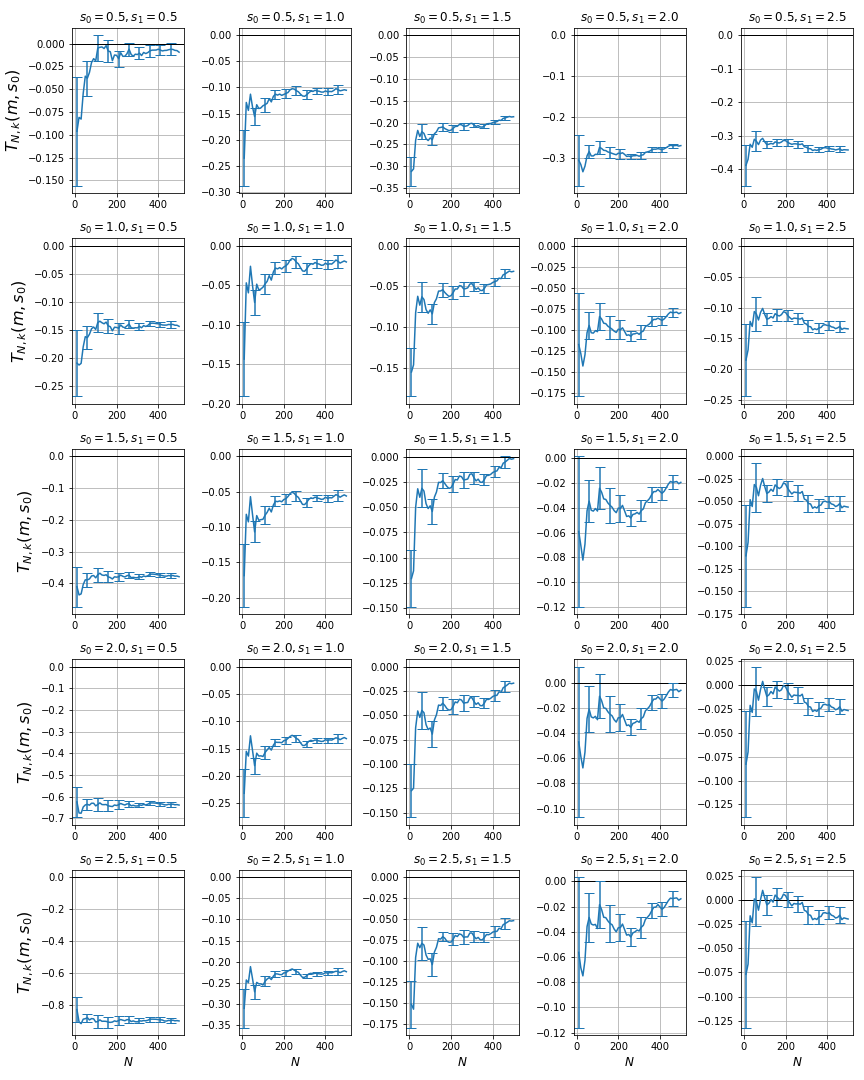}
\caption{The behaviour of $T_{N,k}(m,s_0)$ with $k=1$ on data from the $\GG(m,s_1)$ distribution with $m=2$. }
\label{fig:convergence-GG}
\end{figure*}
\begin{figure*}[!t]
\centering\includegraphics[width=\linewidth]{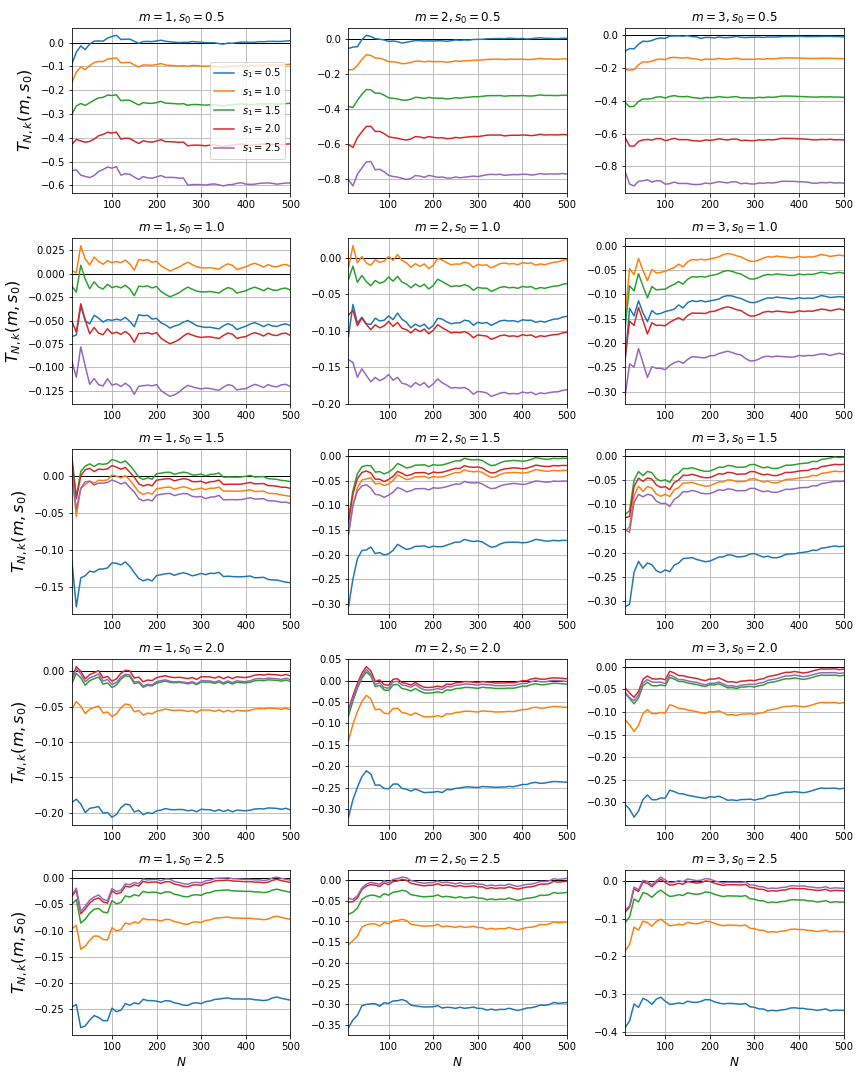}
\caption{The behaviour of $T_{N,k}(m,s_0)$ with $k=1$ on data from the $\GG(m,s_1)$ distribution for different values of $m$  }
\label{fig:convergence-GGall}
\end{figure*}
\begin{figure*}[!t]
\centering\includegraphics[width=\linewidth]{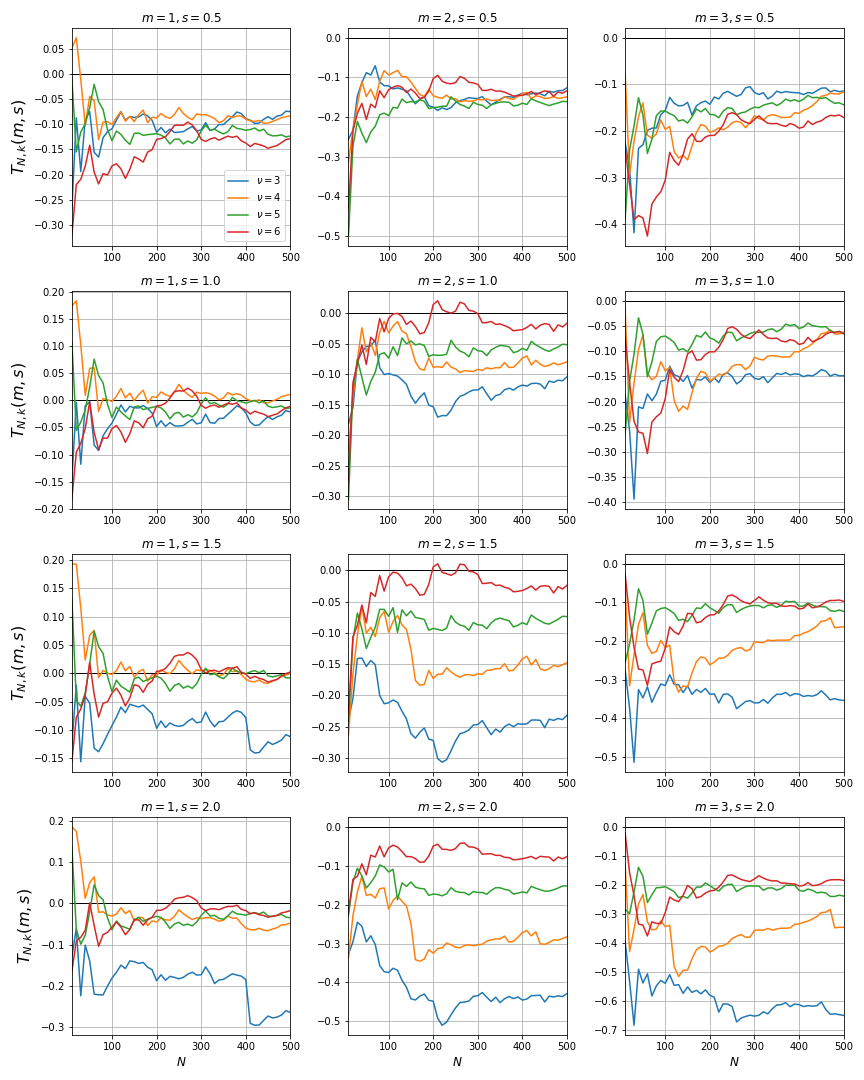}
\caption{The behaviour of $T_{N,k}(m,s_0)$ with $k=1$ on data from the $\ST(m,\nu)$ distribution for different values of $m$.}
\label{fig:convergence-STall}
\end{figure*}
\begin{figure*}[!t]
\centering\includegraphics[width=\linewidth]{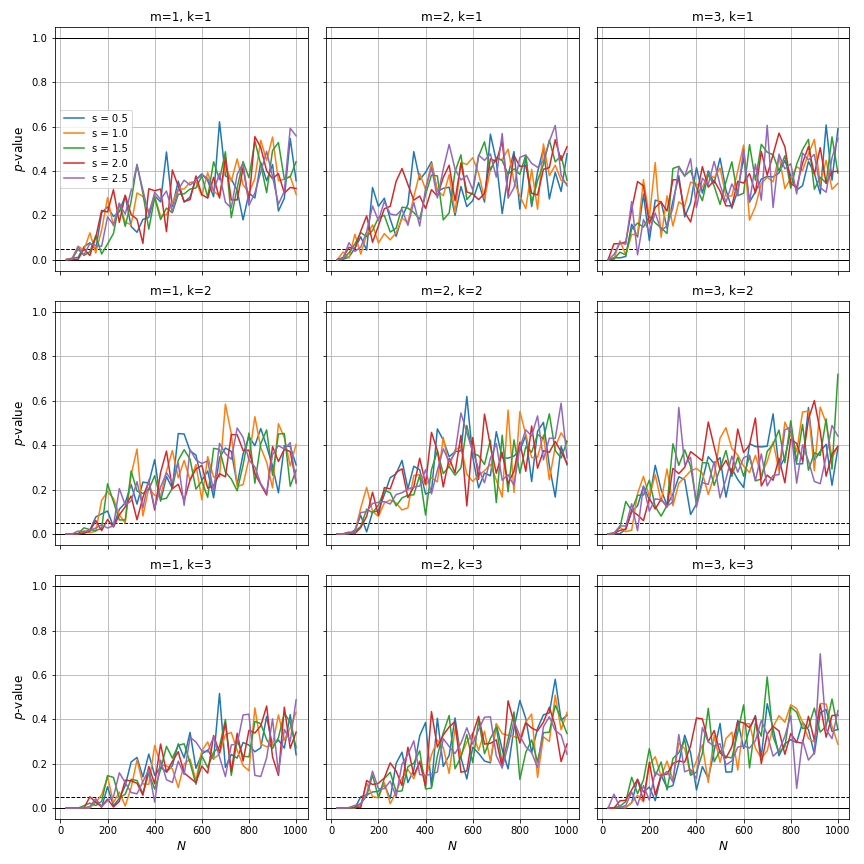}
\caption{Shapiro-Wilk $p$-values as $N$ increases for different values of $m$, $s$ and $k$ ($M=10$ repetitions).}
\label{fig:sw-pvals-all}
\end{figure*}
\begin{figure*}[!t]
\centering\includegraphics[width=\linewidth]{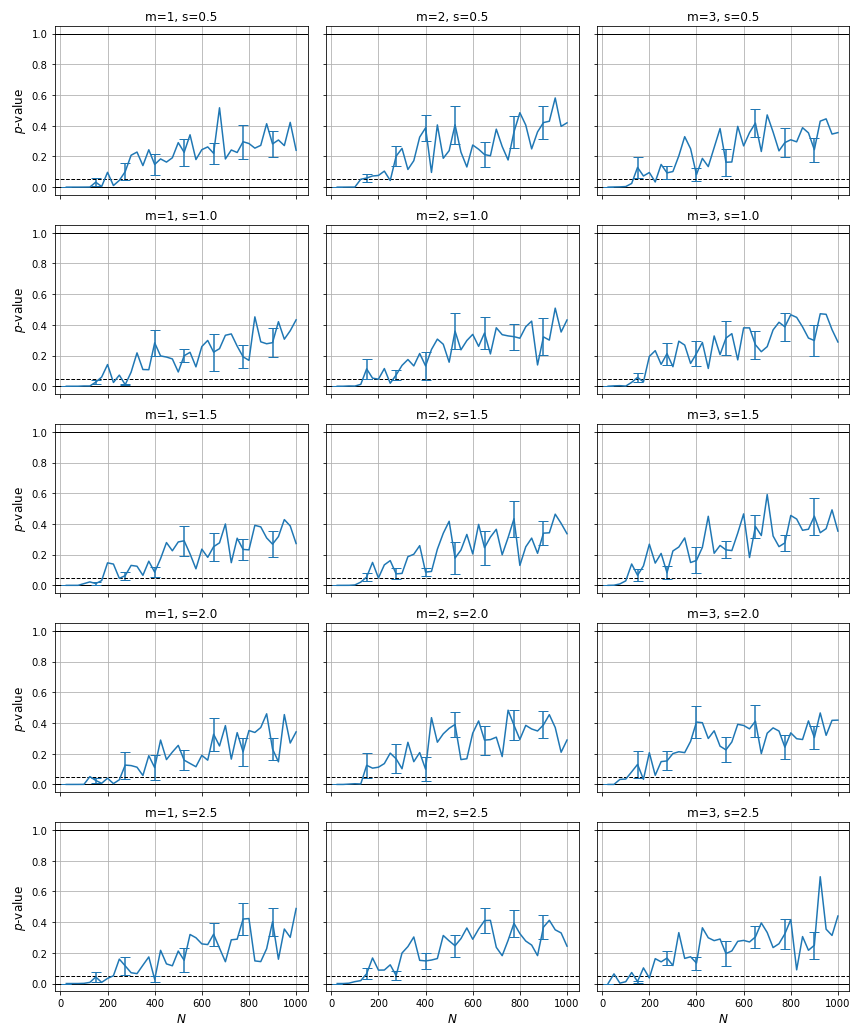}
\caption{Shapiro-Wilk $p$-values as $N$ increases for different values of $m$ and $s$ with $k=1$ ($M=10$ repetitions).}
\label{fig:sw-pvals-k=1}
\end{figure*}

\end{document}